\newcommand{\cc}{\mathbb C}
\newcommand{\ee}{\mathbb E}
\newcommand{\pp}{\mathbb P}
\newcommand{\nn}{\mathbb N}
\newcommand{\rr}{\mathbb R}
\newcommand{\zz}{\mathbb Z}
\newtheorem{assumption}{Assumption}
\newcommand{\CC}{\mathcal C}
\newcommand{\LL}{\mathcal L}
\newcommand{\OOO}{\mathscr O}
\newcommand{\FFF}{\mathscr F}
\allowdisplaybreaks \allowdisplaybreaks[4]
\begin{document}

\title{Strong convergence of numerical discretizations for semilinear stochastic evolution equations driven by multiplicative white noise\thanks{Authors are supported by National Natural Science Foundation of China (NO. 91530118, NO. 91130003, NO. 11021101, NO. 91630312, and NO. 11290142).}}

\titlerunning{Strong convergence of numerical discretizations for semilinear SEEs}        

\author{Jialin Hong         \and
        Chuying Huang \and 
        Zhihui Liu 
}


\institute{Jialin Hong
	          \at
              LSEC, ICMSEC, Academy of Mathematics and Systems Science, Chinese Academy of Sciences, Beijing 100190, China;\\
              School of Mathematical Sciences, University of Chinese Academy of Sciences, Beijing 100049, China\\
              \email{hjl@lsec.cc.ac.cn}         
           \and
           Chuying Huang (Corresponding author) 
           \at
           LSEC, ICMSEC, Academy of Mathematics and Systems Science, Chinese Academy of Sciences, Beijing 100190, China;\\
           School of Mathematical Sciences, University of Chinese Academy of Sciences, Beijing 100049, China\\
           \email{huangchuying@lsec.cc.ac.cn}
           \and
           Zhihui Liu  
           \at
           Department of Mathematics,
           The Hong Kong University of Science and Technology,
           Clear Water Bay, Kowloon, Hong Kong\\
           \email{zhliu@ust.hk}
}

\date{Received: date / Accepted: date}

\maketitle

\begin{abstract}
For semilinear stochastic evolution equations whose coefficients are more general than the classical global Lipschitz, we present results on the strong convergence rates of numerical discretizations. The proof of them provides a new approach to strong convergence analysis of numerical discretizations for a large family of second order parabolic stochastic partial differential equations driven by space-time white noises.
We apply these results to the stochastic advection-diffusion-reaction equation with a gradient term and multiplicative white noise, and show that the strong convergence rate of 
a fully discrete scheme constructed by spectral Galerkin approximation and explicit exponential integrator is exactly $\frac12$ in space and $\frac14$ in time.
Compared with the optimal regularity of the mild solution, it indicates that the spetral Galerkin approximation is superconvergent and the convergence rate of the exponential integrator is optimal.
Numerical experiments support our theoretical analysis.
\keywords{stochastic evolution equation \and 
	multiplicative white noise \and 
	spectral Galerkin approximation \and 
	exponential integrator \and 
	strong convergence rate}
 \subclass{Primary 60H35 \and 60H15}
\end{abstract}

\section{Introduction}
\label{sec1}

Consider the semilinear stochastic evolution equation in the form
\begin{align}\label{spde} \tag{SEE}
	\begin{split}
		&{\rm d}X(t)=(AX(t)+F(X(t))) {\rm d}t+G(X(t)) {\rm d}W(t),
		\ t\in (0,T];\\
		& X(0)=X_0,
	\end{split}
\end{align}
on a Hilbert space $H$. Here $T$ is a fixed positive number, $A$ is a generator of an analytic $C_0$-semigroup $S$ on $H$ and $W=\{W(t):\ t\in [0,T]\}$ is a (possibly cylindrical) ${\bf Q}$-Wiener process. 
Since the exact solutions of stochastic partial differential equations (SPDEs) can not be formulated explicitly in general, there have been numerous works on numerical approximations for SPDEs; see e.g., \cite{ACQ17,BJK16(SINUM),CHL17(SINUM),CHL17(JDE),DG01(MOC),Gyo98(PA),Gyo99(PA),Kru14(IMA),Wal05(PA)} and references therein. 
One attractive and challenging topic is how to prove the critical strong convergence rate of certain numerical approximation while relaxing the conditions on coefficients.
The classical globally Lipschitz condition reads that
\begin{align}\label{globalLip}
	\|F(x)-F(y)\|\le C\|x-y\| 	\quad \text{and}\quad  \|G(x)-G(y)\|_{\LL_2^0}\le C\|x-y\|,\quad \forall~ x,y\in H,
\end{align}
where $C$ is a positive constant, $\LL^{0}_2$ denotes the space of Hilbert-Schmidt operators $\Phi:{\bf Q}^{\frac12}(U)\rightarrow H$ with the norm 
\begin{align*}
	\|\Phi\|_{\LL_2^0}:=\sum_{j=1}^{\infty}\|\Phi e_j\|^2
\end{align*}
and $\{e_j\}_{j\in\mathbb{N}_+}$ is an orthonormal basis of $U_0:={\bf Q}^{\frac12}(U)$. For second order parabolic SPDEs such as the stochastic heat equation, if the noise is the multiplicative space-time white noise (${\bf Q}=I$), condition \eqref{globalLip} is not fulfilled in general, even though the operators $F$ and $G$ are defined by the Nemytskii operators associated to globally Lipschitz continuous functions mapping from $\mathbb{R}$ to $\rr$. Furthermore, the strong convergence rates of numerical approximations in this case have not been studied thoroughly.
In this paper, based on \cite{HHL19JDE,HL19JDE},
we present a new approach to deriving strong convergence rates for numerical discretizations under assumptions which are weaker than condition \eqref{globalLip}. The assumptions are proposed in combination of the semigroup generated by the linear operator and the smoothing effect of the semigroup is more naturally exploited. Therefore, the approach is applicable to the strong convergence analysis of numerical discretizations for a large family of second order parabolic SPDEs, inclusive of the stochastic heat equation and the stochastic advection-diffusion-reaction equation where there is a gradient term describing the advection phenomenon \cite{BCD06(RG)}.
We highlight that the covariance operator ${\bf Q}$ of the Wiener process under study is not supposed to be of trace class. Rougher noises, including the multiplicative space-time white noise, are also available.

More precisely, we first consider a sequence of finite dimensional perturbation equations served as spatial discretizations for Eq. \eqref{spde}. We propose the assumptions with several parameters on the original equation and the perturbation equations, which are relevant to the regularity of their exact solutions. Then 
the spatial strong convergence rate of the sequence of perturbation equations is obtained by means of the Gronwall's inequality with singular kernel.
We further employ the explicit exponential integrator to construct a fully discrete scheme, 
which has implementation advantages over implicit schemes and avoids the CFL-type condition which appears in most of explicit schemes for SPDEs such as Euler--Maruyama scheme (see also \cite{ACLW16(SINUM),AC18(JCM),CLS13(SINUM),CD17(SPDE),Wan15(JSC)}). Combining with a temporally uniform H\"older continuity condition on the solutions of perturbation equations, we prove the strong convergence rate in the temporal direction. 

In order to specify the parameters in the proposed assumptions, we interpret our approach by applying it to a second order parabolic SPDE with a gradient term and multiplicative white noise. A typical example is the stochastic advection-diffusion-reaction equation. In \cite{CH12INT,CN13NM}, the authors obtain that the convergence rate in the H\"older norm of the Galerkin approximation is $\frac12-\epsilon$ and the convergence rate of the implicit-linear Euler scheme  is $\frac14-\epsilon$ in time, where $\epsilon$ denotes an infinitesimal factor.
As to smoother noises, 
\cite{LT13(IMA)} analyzes the exponential integrator for finite element discretizations of Eq. \eqref{spde} driven by trace class noises. 
In \cite{ZTRK15(SINUM)}, the authors study the stochastic collocation methods for linear advection-diffusion-reaction equations with finite dimensional Brownian motion.
In this paper, the spatial and temporal strong convergence order for a fully discrete scheme \eqref{scheme}, which is of spectral Galerkin approximation in space and explicit exponential integrator in time, achieves $\frac12$ and $\frac14$, respectively (see Theorem \ref{tm-whi}).
Compared to the optimal regularity of the solution illustrated in Remark \ref{rk-whi}, the spectral Galerkin approximation is shown to be superconvergent and the temporal convergence rate of the exponential integrator is optimal.


The rest of this paper is structured as follows.
We derive a quantitative continuous dependence result for a sequence of perturbation equations of Eq. \eqref{spde} in the next section which contains the strong convergence rate of spatial approximation for Eq. \eqref{spde}.
In Section \ref{sec3}, we obtain the strong convergence rate of a fully discrete scheme under a temporally uniform H\"older continuity condition for the Galerkin approximated solutions.
Concrete examples are given in Section \ref{sec4}, where we verify the key assumptions to derive the strong convergence rate of the scheme under study.
We give several numerical experiments to confirm our theoretical results in the last section.

\section{Perturbation Equation}
\label{sec2}

In this section, we begin with introducing the setting of the semilinear stochastic evolution equations under study. Then we analyze a sequence of perturbation equations for the original equation \eqref{spde} and give the rate of strong convergence.

\subsection{Setting}
\label{sec2.1}

Let $(H,\|\cdot\|)$ be a separable Hilbert space and denote by $(\LL(H),\|\cdot\|_{\LL(H)})$ the space of bounded linear operators mapping from $H$ to $H$. 
We assume that the linear operator $A: D(A)\subseteq H\rightarrow H$ is the infinitesimal generator of an analytic $C_0$-semigroup $S$ and the resolvent set of $A$ contains all $\lambda\in \cc$ with $\Re [\lambda]\ge 0$. 
Then for $\theta\in \rr$, the fractional powers $(-A)^\theta$  of the operator $-A$ are well-defined.
We denote by $\dot{H}^\theta$ the domain of $(-A)^{\theta/2}$ equipped with the norm $\|\cdot\|_\theta:=\|(-A)^\frac\theta2 (\cdot)\|$. In particular, $\dot{H}^0=H$.
It is well known that (see, e.g., \cite[Chapter 2.6]{Paz83}) 
\begin{align}\label{ana}
&\|(-A)^\nu \|_{\LL(H)}\le C, \nonumber\\
&\|(-A)^\mu S(t)\|_{\LL(H)}\le C t^{-\mu},  \\
&\|(-A)^{-\beta} (S(t)-{\rm Id}_{H}) \|_{\LL(H)}\le Ct^\beta, \nonumber
\end{align}
for any $0<t\le T$, $\nu \le 0\le \mu$ and $0\le \beta\le 1$.
Throughout this paper, $C$ is a generic positive constant independent of dimensions of the spatial discretization and the temporal step size in the sequel.

Let ${\bf Q}$ be a self-adjoint, nonnegative definite and bounded linear operator on a separable Hilbert space $U$. 
Define $\LL^{\theta}_2$ to be the set of Hilbert-Schmidt operators from $U_0:={\bf Q}^{\frac12}(U)$ to $\dot{H}^\theta$. By $W:=\{W(t):\ t\in [0,T]\}$, we denote a $U$-valued (possibly cylindrical) ${\bf Q}$-Wiener process with respect to a stochastic basis $(\Omega,\FFF,(\FFF_t)_{t\in [0,T]},\pp)$. 

Suppose that $F:H\rightarrow \dot{H}^{\theta_F}$, $\theta_F>-2$, and $G:H\rightarrow \LL(U,H)$.
Recall that a predictable stochastic process $X:[0,T]\times \Omega\rightarrow H$  is called a mild solution of Eq. \eqref{spde} if it holds a.s. that $X\in \mathbb L^\infty(0,T;H)$ and 
\begin{align}\label{mild}
X(t)=S(t)X_0+S*F(X)(t)+S\diamond G(X)(t), \quad t\in [0,T],
\end{align}
where $S*F(X)$ and $S\diamond G(X)$ denote the deterministic and stochastic convolutions, respectively:
\begin{align*}
&S*F(X)(\cdot):=\int_0^\cdot S(\cdot-r) F(X(r)){\rm d}r, \\
&S\diamond G(X)(\cdot):=\int_0^\cdot S(\cdot-r) G(X(r)){\rm d}W(r).
\end{align*}

To ensure the well-posedness of Eq. \eqref{spde}, we give the following assumption on the data of this equation in terms of \cite[Theorem 2.1]{HL19JDE}; see also \cite[Theorem 3.1]{HHL19JDE}. 
In the sequel, we use $\theta$ to denote a nonnegative number which characterizes the spatial regularity for the solution of Eq. \eqref{spde}.

\begin{assumption} \label{ap-con}
	There exist four nonnegative, Borel measurable functions $K_F,K_{F_\theta},K_G$ and $K_{G_\theta}$ on $[0,T]$ with
	\begin{align*}
	K^*(T): =\int_0^T \big[K_F(t) + K^2_G(t) \big] {\rm d}t<\infty,\quad
	K_{\theta}^* (T) :=\int_0^T \big[ K_{F_\theta} (t) + K^2_{G_\theta}(t) \big] {\rm d}t<\infty,
	\end{align*}
	such that for any $x,y\in H$, $z\in \dot{H}^\theta$ and almost every  $t\in[0,T]$, it holds that
	\begin{align*}
	\|S(t) (F(x)-F(y))\| &\le K_F(t)\|x-y\|,\\
	\|S(t) (G(x)-G(y))\|_{\LL^0_2} &\le K_G(t)\|x-y\|,\\
	\|S(t) F(z)\|_\theta &\le K_{F_\theta} (t)(1+\|z\|_\theta),\\ 
	\|S(t) G(z)\|_{\LL^{\theta}_2} &\le K_{G_\theta}(t)(1+\|z\|_\theta). 
	\end{align*}
\end{assumption}

\begin{lemma} (\cite[Theorem 2.1]{HL19JDE}) \label{lm-well}
	Let $p\ge 2$  and  $X_0:\Omega\rightarrow \dot{H}^\theta$ be   $\FFF_0/\mathcal{B}(\dot{H}^\theta)$-measurable such that $X_0\in \mathbb L^p(\Omega;\dot{H}^\theta)$. If Assumption \ref{ap-con} holds, then
	Eq. \eqref{spde} admits a unique mild solution $X=\{X(t):\ t\in [0,T]\}$ and there exists a constant $C=C(T,p,K^*_{\theta}(T))$ such that
	\begin{align}\label{well}
	\sup_{t\in [0,T]}\ee\Big[\|X(t)\|_{\theta}^p\Big]
	&\le C\Big(1+\ee\Big[\|X_0\|_{\theta}^p \Big]\Big).
	\end{align}
\end{lemma}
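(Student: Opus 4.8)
The plan is to build the mild solution by a Banach fixed-point argument and then to read off the moment bound \eqref{well} from essentially the same estimates. I would work in the Banach space $\HHH_p$ of $(\FFF_t)$-predictable processes $Y:[0,T]\times\Omega\to H$ with finite norm $\|Y\|_{\HHH_p}:=\sup_{t\in[0,T]}(\ee\|Y(t)\|^p)^{1/p}$, and introduce the solution map
\[(\MMM Y)(t):=S(t)X_0+\int_0^t S(t-r)F(Y(r))\,{\rm d}r+\int_0^t S(t-r)G(Y(r))\,{\rm d}W(r).\]
The first stage is to show that $\MMM$ maps $\HHH_p$ into itself and is a contraction (after passing to an equivalent weighted norm or showing a sufficiently high iterate $\MMM^{n}$ is a contraction), so that Banach's theorem delivers a unique fixed point, which is the desired mild solution of \eqref{mild}.

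The two nonlinear terms are controlled separately by the two Lipschitz-type bounds of Assumption \ref{ap-con}. For the drift I would use the first bound to write
\[\Big\|\int_0^t S(t-r)\big(F(Y(r))-F(Z(r))\big)\,{\rm d}r\Big\|\le\int_0^t K_F(t-r)\,\|Y(r)-Z(r)\|\,{\rm d}r,\]
while for the diffusion the Burkholder--Davis--Gundy inequality together with the second bound gives
\[\ee\Big\|\int_0^t S(t-r)\big(G(Y(r))-G(Z(r))\big)\,{\rm d}W(r)\Big\|^p\le C_p\,\ee\Big(\int_0^t K_G^2(t-r)\,\|Y(r)-Z(r)\|^2\,{\rm d}r\Big)^{p/2}.\]
Taking $p$-th moments, applying Minkowski's integral inequality, and using that $K_F$ and $K_G^2$ are integrable on $[0,T]$ (this is exactly $K^*(T)<\infty$), I would close the contraction. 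It is integrability rather than boundedness of the kernels that makes this work for rough noise, and the estimate naturally invokes the version of Gronwall's inequality with a singular kernel for the difference process.

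For the regularity estimate \eqref{well} I would apply the $\dot{H}^\theta$-norm to the mild formula \eqref{mild}. The initial term is handled by $\|S(t)X_0\|_\theta\le C\|X_0\|_\theta$, since $(-A)^{\theta/2}$ commutes with $S(t)$ and $S$ is a bounded semigroup by \eqref{ana}. The two convolutions are now estimated with the last two bounds of Assumption \ref{ap-con}: the drift obeys $\|S(t-r)F(X(r))\|_\theta\le K_{F_\theta}(t-r)(1+\|X(r)\|_\theta)$, while BDG in $\dot{H}^\theta$ combined with $\|S(t-r)G(X(r))\|_{\LL^\theta_2}\le K_{G_\theta}(t-r)(1+\|X(r)\|_\theta)$ controls the stochastic convolution. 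Writing $\psi(t):=(\ee\|X(t)\|_\theta^p)^{1/p}$ and again invoking Minkowski's inequality, I would arrive at an integral inequality of the form
\[\psi(t)\le C\big(1+\|X_0\|_{\mathbb L^p(\Omega;\dot{H}^\theta)}\big)+\int_0^t K_{F_\theta}(t-r)\,\big(1+\psi(r)\big)\,{\rm d}r+C_p\Big(\int_0^t K_{G_\theta}^2(t-r)\,\big(1+\psi(r)\big)^2\,{\rm d}r\Big)^{1/2},\]
and then square, use $(a+b+c)^2\le 3(a^2+b^2+c^2)$ and Cauchy--Schwarz, and apply the singular-kernel Gronwall inequality to obtain $\sup_{t\in[0,T]}\psi(t)\le C(1+\|X_0\|_{\mathbb L^p(\Omega;\dot{H}^\theta)})$, which is \eqref{well}. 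A preliminary truncation by stopping times may be used to guarantee that $\psi$ is a priori finite before the Gronwall step.

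The main obstacle is the mismatch between the two scales: well-posedness rests on the Lipschitz bounds measured in the $H$-norm, whereas the regularity bound must be propagated in the stronger $\dot{H}^\theta$-norm using only growth (not Lipschitz) control there. The delicate point is that $K_{F_\theta}$ and $K_{G_\theta}$ are merely integrable and typically singular at the origin (behaving like $t^{-\alpha}$ with $\alpha<1$), so the square-root structure produced by BDG on the stochastic convolution must be linearized carefully before a Gronwall argument can close. Handling this through the singular-kernel Gronwall inequality, rather than the classical one, is the crux of the estimate and is precisely the device that accommodates the rougher noise allowed here.
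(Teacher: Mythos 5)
The paper itself does not prove this lemma: it is imported verbatim from \cite[Theorem 2.1]{HL19JDE}, so there is no internal proof to compare against, only the cited one. Your proposal reconstructs what is essentially the argument of that reference: a fixed-point construction in a space of predictable processes under the norm $\sup_{t\in[0,T]}(\ee\|Y(t)\|^p)^{1/p}$, with the deterministic convolution controlled via Minkowski's inequality and $K_F$, the stochastic convolution via the Burkholder--Davis--Gundy inequality, Minkowski's integral inequality and $K_G$, the contraction closed by a weighted norm (which works precisely because $K_F$ and $K_G^2$ are integrable, so $\int_0^T e^{-\lambda s}K_F(s)\,{\rm d}s\to 0$ as $\lambda\to\infty$), and the $\dot{H}^\theta$-bound \eqref{well} closed by the Gronwall inequality with singular kernel. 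The structure, the emphasis on integrability rather than boundedness of the kernels, and the identification of the singular-kernel Gronwall step as the crux are all on target.

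The one step I would not accept as written is the device you invoke to secure a priori finiteness of $\psi(t)=(\ee\|X(t)\|_\theta^p)^{1/p}$. Your fixed point is constructed in $\HHH_p$, so $X(r)$ is only known to take values in $H$; but to apply the growth bounds of Assumption \ref{ap-con} with $z=X(r)$ you must already know $X(r)\in\dot{H}^\theta$ a.s. A stopping-time truncation does not supply this: before the regularity is established, $\|X(t)\|_\theta$ may be genuinely infinite (not merely large), and $t\mapsto\|X(t)\|_\theta$ is not known to be finite, adapted, or regular enough for the relevant stopping times to be well defined. Two standard repairs close this gap. (i) Run the Picard iteration from $X^{(0)}(t)=S(t)X_0\in\dot{H}^\theta$, show by induction that each iterate satisfies the integral inequality you wrote with a bound uniform in the iteration index (comparison with the solution of the associated linear Volterra equation, which is finite by the singular-kernel Gronwall lemma), and transfer the bound to the limit using Fatou's lemma together with weak lower semicontinuity of the $\dot{H}^\theta$-norm along $H$-convergent sequences. (ii) Alternatively, note that a closed ball of the space with norm $\sup_t(\ee\|Y(t)\|_\theta^p)^{1/p}$ is closed under the $\HHH_p$-metric (same lower-semicontinuity argument), verify via $K_{F_\theta},K_{G_\theta}$ that $\MMM$ leaves a suitable such ball invariant, and conclude that the unique fixed point lies in that ball, which yields \eqref{well} directly. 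With either repair your argument is complete and coincides in substance with the proof in the cited reference.
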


\subsection{Perturbation Equation}

In this subsection,
we consider a family of perturbation equations which are deduced by certain spatial semidiscretization for Eq. \eqref{spde},
\begin{align}\label{spdeG} \tag{SEEN}
\begin{split}
&{\rm d}X^N(t)=(A^NX^N(t)+F^N(X^N(t))) {\rm d}t+G^N(X^N(t)) {\rm d}W(t),
\ t\in (0,T];\\
& X^N(0)=X^N_0.
\end{split}
\end{align}
For any $N\in \nn_+$, the linear operator $A^N$ is the infinitesimal generator of an analytic $C_0$-semigroup $S^N$ on $H$.
For example, the perturbation equations are constructed by the Galerkin method in Section \ref{sec4}.

\begin{assumption} \label{ap-spe}
	There exist four nonnegative, Borel measurable functions $K_{F^N},K_{F^N_\theta},K_{G^N}$ and $K_{G^N_\theta}$ on $[0,T]$ with
	\begin{align*}
	\widetilde{K^*}(T):=\sup_{N\in \nn_+}\int_0^T K_{F^N}(t) {\rm d}t+\sup_{N\in \nn_+}\int_0^T K^2_{G^N}(t) {\rm d}t<\infty, \\
	\widetilde{K_{\theta}^*} (T):=\sup_{N\in \nn_+}\int_0^T K_{F^N_\theta} (t) {\rm d}t+\sup_{N\in \nn_+}\int_0^T K^2_{G^N_\theta}(t) {\rm d}t<\infty,
	\end{align*}
	such that for any $x,y\in H$, $z\in \dot{H}^\theta$ and almost every $t\in[0,T]$, it holds that
	\begin{align*}
	\|S^N(t) (F^N(x)-F^N(y))\| &\le K_{F^N}(t)\|x-y\|, \\
	\|S^N(t) (G^N(x)-G^N(y))\|_{\LL^0_2} &\le K_{G^N}(t)\|x-y\|, \\
	\|S^N(t) F^N(z)\|_\theta &\le K_{F^N_\theta} (t)(1+\|z\|_\theta), \\
	\|S^N(t) G^N(z)\|_{\LL^{\theta}_2} &\le K_{G^N_\theta}(t)(1+\|z\|_\theta). 
	\end{align*}
\end{assumption}

Similar to the case of equation \eqref{spde}, the well-posedness of the perturbation equation \eqref{spdeG} is ensured by Assumption \ref{ap-spe}.

\begin{lemma}\label{lm31}
	Let $p\ge 2$ and $X^N_0:\Omega\rightarrow \dot{H}^\theta$ be   $\FFF_0/\mathcal{B}(\dot{H}^\theta)$-measurable and uniformly bounded in $\mathbb L^p(\Omega;\dot{H}^\theta)$. If Assumption \ref{ap-spe} holds, 
	then Eq. \eqref{spdeG} admits a unique mild solution
	$X^N=\{X^N(t):\ t\in [0,T]\}$, and there exists a constant $C=C(T,p,\widetilde{K_\theta^*}(T))$ such that 
	\begin{align}\label{welln}
	\sup_{N\in \nn_+}\sup_{t\in [0,T]}\ee\Big[\|X^N(t)\|_{\theta}^p\Big]
	&\le C\Big(1+\sup_{N\in \nn_+}\ee\Big[\|X^N_0\|_{\theta}^p \Big]\Big).
	\end{align}
\end{lemma}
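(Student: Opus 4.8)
The plan is to reduce the statement to the well-posedness result already available for the original equation, namely Lemma \ref{lm-well}, by applying it to each perturbation equation \eqref{spdeG} separately, and then to extract a bound that is uniform in $N$. First I would fix $N\in\nn_+$ and observe that the quadruple $(A^N,F^N,G^N,S^N)$ plays exactly the role of $(A,F,G,S)$ in the setting of Section \ref{sec2.1}: by hypothesis $A^N$ generates an analytic $C_0$-semigroup $S^N$ on $H$, and Assumption \ref{ap-spe} furnishes, for this fixed $N$, precisely the four estimates required by Assumption \ref{ap-con}, now with the functions $K_{F^N},K_{G^N},K_{F^N_\theta},K_{G^N_\theta}$. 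Setting
$$K^{N,*}(T):=\int_0^T\big[K_{F^N}(t)+K_{G^N}^2(t)\big]\,\mathrm{d}t,\qquad K_\theta^{N,*}(T):=\int_0^T\big[K_{F^N_\theta}(t)+K_{G^N_\theta}^2(t)\big]\,\mathrm{d}t,$$
the integrability conditions in Assumption \ref{ap-spe} guarantee that both quantities are finite, and moreover
$$K^{N,*}(T)\le\widetilde{K^*}(T),\qquad K_\theta^{N,*}(T)\le\widetilde{K_\theta^*}(T)$$
with bounds independent of $N$. Thus Assumption \ref{ap-con} holds for \eqref{spdeG} for every fixed $N$.

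With these verifications in hand, I would apply Lemma \ref{lm-well} to Eq. \eqref{spdeG} for each fixed $N$. Since $X_0^N$ is $\FFF_0/\mathcal{B}(\dot{H}^\theta)$-measurable with $X_0^N\in\mathbb L^p(\Omega;\dot{H}^\theta)$, the lemma delivers, for every $N$, a unique mild solution $X^N=\{X^N(t):t\in[0,T]\}$ together with the moment estimate
$$\sup_{t\in[0,T]}\ee\big[\|X^N(t)\|_\theta^p\big]\le C\big(T,p,K_\theta^{N,*}(T)\big)\Big(1+\ee\big[\|X_0^N\|_\theta^p\big]\Big).$$
This already settles the existence and uniqueness assertions of the lemma; the only remaining task is to render the right-hand side uniform in $N$.

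The main obstacle, and indeed the single delicate point, lies in the $N$-dependence of the constant $C(T,p,K_\theta^{N,*}(T))$. Here I would exploit the structure of the proof of Lemma \ref{lm-well}: the constant arises from a Gronwall inequality with singular kernel, so it depends on the data only through $T$, $p$, and the scalar quantity $K_\theta^{N,*}(T)$, and this dependence is monotonically nondecreasing in the last argument. Combining this monotonicity with the uniform bound $K_\theta^{N,*}(T)\le\widetilde{K_\theta^*}(T)$ established in the first step yields
$$C\big(T,p,K_\theta^{N,*}(T)\big)\le C\big(T,p,\widetilde{K_\theta^*}(T)\big)=:C$$
for every $N$, with $C$ now independent of $N$.

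Finally, taking the supremum over $N\in\nn_+$ on both sides of the moment estimate and invoking the hypothesis that $X_0^N$ is uniformly bounded in $\mathbb L^p(\Omega;\dot{H}^\theta)$ gives
$$\sup_{N\in\nn_+}\sup_{t\in[0,T]}\ee\big[\|X^N(t)\|_\theta^p\big]\le C\Big(1+\sup_{N\in\nn_+}\ee\big[\|X_0^N\|_\theta^p\big]\Big),$$
which is exactly \eqref{welln}. The crux of the argument is therefore not the derivation of any new estimate but the careful bookkeeping of the constant: one must confirm that the proof of Lemma \ref{lm-well} produces a constant governed by the data solely through $T$, $p$, and $K_\theta^*(T)$ in a monotone way, so that replacing $K_\theta^{N,*}(T)$ by its uniform upper bound $\widetilde{K_\theta^*}(T)$ is legitimate and the resulting constant is genuinely $N$-free.
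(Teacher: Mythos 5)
Your proposal is correct and takes essentially the same route as the paper, which disposes of this lemma in a single sentence: for each fixed $N$, Assumption \ref{ap-spe} places Eq.~\eqref{spdeG} exactly in the setting of Lemma \ref{lm-well} (with $A^N,F^N,G^N,S^N$ in place of $A,F,G,S$), and the uniformity in $N$ of the moment bound comes from the uniform bounds $\widetilde{K^*}(T)$ and $\widetilde{K_\theta^*}(T)$ on the integrated kernels. Your explicit bookkeeping of the constant --- that $C(T,p,K_\theta^{N,*}(T))$ can be replaced by $C(T,p,\widetilde{K_\theta^*}(T))$ via monotone dependence on the last argument --- is precisely the point the paper leaves implicit.
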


In order to obtain the strong convergence rate of the perturbation equations for the original equation, we propose the two assumptions, in which the parameters $r_0,r_F,r_G$ will be concretized by a concrete example in Section \ref{sec4}.

\begin{assumption} \label{ap-spe-X0}
	The initial values $X_0,X^N_0:\Omega\rightarrow \dot{H}^\beta$ are $\FFF_0/\mathcal{B}(\dot{H}^\beta)$-measurable such that $X_0,X^N_0\in \mathbb L^p(\Omega;\dot{H}^\beta)$ and there exists a positive number $r_0$ such that 
	\begin{align}
	\sup_{t\in [0,T]}\|S^N(t)X^N_0-S(t)X_0\|_{\mathbb L^p(\Omega;H)}
	\le CN^{-r_0}\|X_0\|_{\mathbb L^p(\Omega;\dot{H}^\beta)}.
	\end{align}
\end{assumption}

\begin{assumption} \label{ap-spe-ord}
	There exist two nonnegative, Borel measurable functions $R_{F^N},R_{G^N}$ on $[0,T]$ and two positive numbers $r_F,r_G$ depending on $\theta$ with
	\begin{align*}
	\int_0^T R_{F^N}(t) {\rm d}t\le CN^{-r_F}
	\quad \text{and}\quad 
	\left(\int_0^T R^2_{G^N}(t) {\rm d}t\right)^\frac12\le CN^{-r_G},
	\end{align*}
	such that for any $z\in \dot{H}^{\theta}$ and almost every $t\in[0,T]$, it holds that
	\begin{align*}
	\|S^N(t)F^N(z)-S(t)F(z)\|&\le R_{F^N}(t)(1+\|z\|_{\theta}),\\
	\|S^N(t)G^N(z)-S(t)G(z)\|_{\LL^0_2}&\le R_{G^N}(t)(1+\|z\|_{\theta}).
	\end{align*}
\end{assumption}

Our main result of this section is on the strong error between the perturbation equation \eqref{spdeG} and the original equation \eqref{spde}.

\begin{theorem}\label{x-xn}
	Let $p\ge 2$ and $\beta\ge\theta\ge 0$. If Assumptions \ref{ap-con}--\ref{ap-spe-ord} hold,
	then there exists a constant $C$ such that 
	\begin{align}\label{x-xn0}
	\sup_{t\in [0,T]}\|X^N(t)-X(t)\|_{\mathbb L^p(\Omega;H)}\le C N^{-(r_0 \wedge r_F \wedge r_G)}.
	\end{align}
	
\end{theorem}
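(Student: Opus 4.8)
The plan is to compare the two mild formulations \eqref{mild} term by term and close the resulting estimate with a Gronwall argument adapted to singular kernels. Writing out the mild solutions of \eqref{spde} and \eqref{spdeG}, the error $X^N(t)-X(t)$ splits into three pieces: an initial-data term $S^N(t)X^N_0-S(t)X_0$, a drift-convolution term, and a stochastic-convolution term. The first piece is controlled immediately by Assumption \ref{ap-spe-X0}, contributing $CN^{-r_0}$. For the drift and diffusion convolutions I would insert and subtract the mixed integrands $S^N(t-r)F^N(X(r))$ and $S^N(t-r)G^N(X(r))$, so that each convolution becomes a \emph{Lipschitz} part, in which the semigroup acts on $F^N(X^N(r))-F^N(X(r))$ (resp. the $G$-analogue), plus a \emph{consistency} part $S^N(t-r)F^N(X(r))-S(t-r)F(X(r))$ (resp. $G$).

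First I would estimate the consistency parts. Since $\beta\ge\theta$, Assumption \ref{ap-spe-X0} places $X_0$ in $\dot H^\theta$, so Lemma \ref{lm-well} gives $\sup_{r\in[0,T]}\|X(r)\|_{\mathbb L^p(\Omega;\dot H^\theta)}<\infty$. Feeding $z=X(r)$ into Assumption \ref{ap-spe-ord} and integrating, the drift consistency term is bounded by $C\int_0^T R_{F^N}(s)\,\mathrm ds\le CN^{-r_F}$ (Minkowski's integral inequality brings the $\mathbb L^p(\Omega)$-norm inside the time integral), and the diffusion consistency term, after the Burkholder--Davis--Gundy inequality followed by Minkowski, by $C(\int_0^T R^2_{G^N}(s)\,\mathrm ds)^{1/2}\le CN^{-r_G}$. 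Together with the initial term these produce the source $CN^{-(r_0\wedge r_F\wedge r_G)}$.

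Next I would treat the Lipschitz parts, writing $e(t):=\|X^N(t)-X(t)\|_{\mathbb L^p(\Omega;H)}$. The Lipschitz-type bounds of Assumption \ref{ap-spe} turn the drift part into $\int_0^t K_{F^N}(t-r)e(r)\,\mathrm dr$ and, via Burkholder--Davis--Gundy, the diffusion part into $(C\int_0^t K^2_{G^N}(t-r)e(r)^2\,\mathrm dr)^{1/2}$. To reconcile the linear and quadratic appearances of $e$, I would square the whole estimate: Cauchy--Schwarz applied to the drift integral, using $\int_0^T K_{F^N}(s)\,\mathrm ds\le\widetilde{K^*}(T)$, converts $(\int_0^t K_{F^N}(t-r)e(r)\,\mathrm dr)^2$ into $\widetilde{K^*}(T)\int_0^t K_{F^N}(t-r)e(r)^2\,\mathrm dr$. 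Setting $\phi:=e^2$ then yields
\begin{align*}
\phi(t)\le CN^{-2(r_0\wedge r_F\wedge r_G)}+C\int_0^t\big[K_{F^N}(t-r)+K^2_{G^N}(t-r)\big]\phi(r)\,\mathrm dr.
\end{align*}

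Finally I would apply Gronwall's inequality with singular kernel: the convolution kernel $K_{F^N}+K^2_{G^N}$ is integrable on $[0,T]$ with total mass $\le\widetilde{K^*}(T)$ \emph{uniformly in} $N$ by Assumption \ref{ap-spe}, so the resulting Gronwall constant is $N$-independent, giving $\phi(t)\le CN^{-2(r_0\wedge r_F\wedge r_G)}$ uniformly in $t$ and $N$; taking square roots is \eqref{x-xn0}. I expect the main obstacle to be the bookkeeping that keeps every constant uniform in $N$: in particular, combining the mismatched first- and second-order occurrences of $e$ into a single linear integral inequality for $\phi$, and verifying that the singular-kernel Gronwall constant depends only on the $N$-uniform bound $\widetilde{K^*}(T)$ and not on the individual (possibly singular) kernels $K_{F^N},K_{G^N}$.
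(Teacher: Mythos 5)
Your proposal is correct and follows essentially the same route as the paper's own proof: the identical three-term decomposition of the mild solutions, the same insertion of the mixed integrands $S^N(t-r)F^N(X(r))$ and $S^N(t-r)G^N(X(r))$ to split Lipschitz and consistency parts, the Burkholder--Davis--Gundy and Cauchy--Schwarz steps to produce a squared linear integral inequality with kernel $K_{F^N}+K^2_{G^N}$, and the singular-kernel Gronwall lemma of \cite[Lemma 3.1]{HL19JDE} with the $N$-uniform mass bound $\widetilde{K^*}(T)$ closing the argument. No gaps; the uniformity-in-$N$ bookkeeping you flag as the main obstacle is handled exactly as you describe.
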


\begin{proof}
	For any $t\in [0,T]$, 
	we decompose the error between $X^N(t)$ and $X(t)$ as
	\begin{align*}
	\left\|X^N(t)-X(t)\right\|_{\mathbb L^p(\Omega;H)}
	\le& \left\|S^N(t)X^N_0-S(t)X_0\right\|_{\mathbb L^p(\Omega;H)}\\
	&+\left\|\int_0^t \left(S^N(t-r) F^N(X^N(r))-S(t-r) F(X(r))\right){\rm d}r\right\|_{\mathbb L^p(\Omega;H)}\\
	&+\left\|\int_0^t \left(S^N(t-r) G^N(X^N(r))
	- S(t-r) G(X(r))\right){\rm d}W(r)\right\|_{\mathbb L^p(\Omega;H)}\\
	= &:I_1(t)+I_2(t)+I_3(t).
	\end{align*}
	
	For the first term, it follows from Assumption \ref{ap-spe-X0} that 
	\begin{align*}
	I_1(t)\le CN^{-r_0}\|X_0\|_{\mathbb L^p(\Omega;\dot{H}^\beta)}.
	\end{align*}
	We deduce by the Minkowski inequality and Assumptions \ref{ap-spe}-\ref{ap-spe-ord} that
	\begin{align*}
	I_2(t)
	&\le \int_0^t \|S^N(t-r) F^N(X^N(r))-S^N(t-r) F^N(X(r))\|_{\mathbb L^p(\Omega;H)}{\rm d}r\\
	& \quad +\int_0^t \|S^N(t-r) F^N(X(r))-S(t-r) F(X(r)\|_{\mathbb L^p(\Omega;H)}{\rm d}r\\
	&\le \int_0^t K_{F^N}(t-r)\|X^N(r)-X(r)\|_{\mathbb L^p(\Omega;H)}{\rm d}r\\
	& \quad +\int_0^t R_{F^N}(t-r)\left(1+\|X(r)\|_{\mathbb L^p(\Omega;\dot{H}^\theta)}\right){\rm d}r.
	\end{align*}
	Moreover, the Burkholder--Davis--Gundy inequality and Assumptions \ref{ap-spe}-\ref{ap-spe-ord} yield
	\begin{align*}
	I_3^2(t)\le&
	2\int_0^t \|S^N(t-r) G^N(X^N(r))-S^N(t-r) G^N(X(r))\|^2_{\mathbb L^p(\Omega;\LL^0_2)}{\rm d}r\\
	&+2\int_0^t\|S^N(t-r) G^N(X(r))-S(t-r) G(X(r))\|^2_{\mathbb L^p(\Omega;\LL^0_2)}{\rm d}r\\
	\le& 2\int_0^t K^2_{G^N}(t-r)\|X^N(r)-X(r)\|^2_{\mathbb L^p(\Omega;H)} {\rm d}r\\
	&+2\int_0^t R^2_{G^N_\theta}(t-r)\left(1+\|X(r)\|_{\mathbb L^p(\Omega;\dot{H}^\theta)}\right)^2{\rm d}r.
	\end{align*}
	
	Together with the previous estimations with the Minkowski and H\"older inequalities, we derive
	\begin{align*}
	\|X^N(t)-X(t)\|^2_{\mathbb L^p(\Omega;H)}
	\le& CN^{-2r_0}\|X_0\|^2_{\mathbb L^p(\Omega;\dot{H}^\beta)}\\
	& +C\bigg(1+\sup_{t\in [0,T]}\|X(t)\|_{\mathbb L^p(\Omega;\dot{H}^\theta)}\bigg)^2\bigg(\int_0^T R_{F^N}(t){\rm d}t\bigg)^2\\
	& +C\bigg(1+\sup_{t\in [0,T]}\|X(t)\|_{\mathbb L^p(\Omega;\dot{H}^\theta)}\bigg)^2\bigg(\int_0^T R_{G^N}^2(t){\rm d}t\bigg)\\
	& +C
	\int_0^t \widetilde{K^*}(T) K_{F^N}(t-r)\|X^N(r)-X(r)\|^2_{\mathbb L^p(\Omega;H)}{\rm d}r\\
	& +C\int_0^t K^2_{G^N}(t-r)\|X^N(r)-X(r)\|^2_{\mathbb L^p(\Omega;H)} {\rm d}r.
	\end{align*}
	Define 
	$M_N(t):=\widetilde{K^*}(T)K_{F^N}(t)+K_{G^N}^2(t)$, $t\in [0,T]$.
	According to Assumption \ref{ap-spe}, it is clear that $M_N$ is uniformly integrable on $[0,T]$, $N\in\nn_+$. 
	By Lemmas \ref{lm-well}-\ref{lm31} and Assumption \ref{ap-spe-ord}, we obtain 
	\begin{align*}
	\|X^N(t)-X(t)\|^2_{\mathbb L^p(\Omega;H)}
	\le C N^{-2(r_0 \wedge r_F \wedge r_G)}+C\int_0^t M_N(t-r)\|X^N(r)-X(r)\|^2_{\mathbb L^p(\Omega;H)} {\rm d}r,
	\end{align*}
	from which we conclude \eqref{x-xn0} by the Gronwall's inequality with singular kernel in \cite[Lemma 3.1]{HL19JDE}.
\end{proof}
\section{Fully Discrete Scheme}
\label{sec3}
To construct a fully discretization of Eq. \eqref{spde}, we apply the explicit exponential integrator to the perturbation equation \eqref{spdeG}. More precisely, given $K\in \nn_+$, we define approximation $X^N_{k}$ for $X^N(t_k)$ by the recursion
\begin{align}\label{scheme} \tag{EI}
X^N_{k}=S^N(\tau)X^N_{k-1}+\tau S^N(\tau)F^N(X^N_{k-1})+S^N(\tau)G^N(X^N_{k-1})\Delta W_{k}, \quad k=1,\cdots,K
\end{align}
with the same initial datum as Eq. \eqref{spdeG}. Here $\tau=\frac{T}{K}$, $t_k=k\tau$ and $\Delta W_{k}=W(t_{k})-W(t_{k-1})$.
Equivalently, 
\begin{align*}
X^N_k= S^N(t_k)X^N_0+\sum_{i=0}^{k-1}\int_{t_i}^{t_{i+1}}S^N(t_k-t_i)F^N(X^N_i)  {\rm d}r +\sum_{i=0}^{k-1}\int_{t_i}^{t_{i+1}}S^N(t_k-t_i)G^N(X^N_i)  {\rm d} W(r).
\end{align*}


To derive the strong convergence rate of scheme \eqref{scheme}, we propose the following assumption on $S^N, F^N$ and $G^N$.

\begin{assumption}\label{ap-tem}
	There exist two nonnegative, Borel measurable functions $R_{F^{N,\tau}}$ and $R_{G^{N,\tau}}$ on $[0,T]$ and two positive numbers $\eta_F$, $\eta_G$ depending on $\theta$ with
	\begin{align*}
	\sup_{N\in \nn_+}\int_0^{T}R_{F^{N,\tau}}(t){\rm d}t\le C \tau^{\eta_F}
	\quad \text{and}\quad 
	\left(\sup_{N\in \nn_+}\int_0^{T}R^2_{G^{N,\tau}}(t){\rm d}t\right)^\frac12\le C \tau^{\eta_G},
	\end{align*}
	such that for any $z\in \dot{H}^{\theta}$ and almost every $t\in[0,T]$, it holds that
	\begin{align*}
	\|(S^N(t)-S^N(\lceil t \rceil_\tau))F^N(z)\|
	&\le R_{F^{N,\tau}}(t)(1+\|z\|_\theta), \\
	\|(S^N(t)-S^N(\lceil t \rceil_\tau))G^N(z)\|_{\LL^0_2}
	&\le R_{G^{N,\tau}}(t)(1+\|z\|_\theta).
	\end{align*}
	Here $\lceil t \rceil_\tau:=\min \{t_i:t_i\ge t,i=0,\cdots,K\}$.
\end{assumption}

In addition to the above assumption on the data of the perturbation equation \eqref{spdeG}, we  need a uniform H\"older regularity of their solutions.

\begin{assumption}\label{ap-xn-hol}
	The uniform H\"older exponent of $\{X^N(t):\ t\in [0,T]\}_{N\in \nn_+}$ in $\mathbb L^p(\Omega;H)$ is $\gamma$ for some $\gamma\in(0,1/2]$, i.e., 
	\begin{align*}
	\sup_{N\in \nn_+}\sup_{0\le s<t\le T}\frac{\|X^N(t)-X^N(s)\|_{\mathbb L^p(\Omega;H)}}{(t-s)^\gamma} <\infty .
	\end{align*}
\end{assumption}

\begin{remark}
	The H\"older exponent in Assumption \ref{ap-xn-hol} is not larger than $1/2$ due to the temporal regularity of $\bf Q$-Wiener processes. In the general multiplicative noise case, the uniform H\"older continuity assumption is essential in our error analysis.
	We also note that in the additive noise case, for instance, if there exists a constant operator $G$ such that $G(z)\equiv G$ for all $z\in H$, then Assumption \ref{ap-xn-hol} is not necessary in the strong error analysis. 
\end{remark}

We also need a discrete version of the Gronwall's inequality with singular kernel. For the corresponding continuous version, we refer to \cite[Lemma 3.1]{HL19JDE}.

\begin{lemma}\label{Gron}
	Let $m>0$ and $\Psi^N: [0,T] \rightarrow \mathbb{R}$, $N\in \nn_+$, be a sequence of nonnegative, Borel measurable functions such that
	\begin{align}\label{Gron_R}
	\alpha(T):=\sup_{N\in \nn_+} \sup_{K\in \nn_+}\sum_{i=1}^{K} \Psi^N(t_i)\tau<\infty.
	\end{align}
	For any $N\in \nn_+$, assume that $\{f^N(k)\}_{k=0}^{K}$ is a nonnegative sequence such that 
	\begin{align*}
	f^N(k)\le m + \sum_{i=0}^{k-1} \Psi^N(t_k-t_i)f^N(i)\tau,
	\end{align*}
	then there exists a constant $\mu$ independent of $K$ and $N$ such that 
	\begin{align*}
	\sup_{N\in \nn_+}  \sup_{K\in \nn_+} \sup_{0\le k\le K} f^N(k)\le 2me^{\mu T}.
	\end{align*}
\end{lemma}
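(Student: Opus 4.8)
The plan is to mirror the continuous singular Gronwall inequality \cite[Lemma 3.1]{HL19JDE} at the discrete level, the extra work being to make every constant uniform in the mesh number $K$ and in $N$. Write $\psi^N_j:=\Psi^N(t_j)\tau$ for the discrete kernel weights, so that the hypothesis reads $\sup_N\sup_K\sum_{j=1}^{K}\psi^N_j\le\alpha(T)$ and the assumed inequality becomes $f^N(k)\le m+\sum_{j=1}^{k}\psi^N_j\,f^N(k-j)$. First I would reduce to the maximal solution: since $\psi^N_j\ge0$ and $f^N\ge0$, an induction on $k$ shows $f^N(k)\le b^N(k)$, where $b^N$ solves the renewal equality $b^N(k)=m+\sum_{j=1}^{k}\psi^N_j\,b^N(k-j)$ with $b^N(0)=m$; it therefore suffices to bound $b^N$, which is moreover nondecreasing in $k$.

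Next I would iterate the convolution inequality. Feeding it into itself $n$ times gives $f^N(k)\le m\sum_{l=0}^{n-1}\big(\psi^{N,\star l}\ast 1\big)(k)+\big(\psi^{N,\star n}\ast f^N\big)(k)$, where $\psi^{N,\star l}$ is the $l$-fold discrete convolution of the weight sequence, $\psi^{N,\star 0}=\delta_0$, and $\ast 1$ denotes convolution with the unit sequence (i.e.\ partial summation). The whole estimate then rests on a mesh-uniform bound for the masses $\|\psi^{N,\star l}\|_1:=\sum_k(\psi^{N,\star l})_k$. A crude discrete Young inequality only yields $\|\psi^{N,\star l}\|_1\le\alpha(T)^l$, which is useless once $\alpha(T)\ge1$; the decisive gain must come from the weak singularity of the kernel, exactly as the factor $1/\Gamma(l\rho)$ arises in the continuous case. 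The quantitative input one needs is genuine smallness of the kernel mass on short lags, uniformly in the mesh: there are $\delta>0$ and $L=\lceil\delta/\tau\rceil$ with $\sup_N\sup_K\sum_{j=1}^{L}\psi^N_j\le\tfrac12$. This smallness is automatic for the weakly singular kernels $t^{\rho-1}$, $\rho\in(0,1]$, produced by Assumptions \ref{ap-spe}--\ref{ap-tem}, and it is the discrete counterpart of the $1/\Gamma(l\rho)$ gain; it is \emph{not} implied by the bare integrability $\alpha(T)<\infty$ alone, so the structural form of $\Psi^N$ has to be invoked here.

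With this local smallness in hand, the cleanest finish is a partition/bootstrap that produces the stated constant. Split $[0,T]$ into $\lceil T/\delta\rceil$ windows of length $\delta$. On the first window ($t_k\le\delta$) all lags satisfy $j\le L$, so $b^N(k)\le m+\tfrac12\max_{0\le l\le k}b^N(l)$; since $b^N$ is nondecreasing this gives $b^N(k)\le m+\tfrac12 b^N(k)$, hence $\max_{t_k\le\delta}b^N(k)\le 2m$. On a later window I would split the lags into near lags (mass $\le\tfrac12$) and far lags (total mass $\le\alpha(T)$, hitting only earlier windows), obtaining $M_\ell\le 2m+2\alpha(T)M_{\ell-1}$ for the window maxima $M_\ell$. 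Iterating this geometric recursion over the $\lceil T/\delta\rceil$ windows yields $\sup_{0\le k\le K}b^N(k)\le 2m\,C^{\lceil T/\delta\rceil}=2m e^{\mu T}$ with $\mu:=\delta^{-1}\log C$ and $C$ a fixed multiple of $1+\alpha(T)$, both independent of $K$ and $N$. Equivalently one may simply sum the Mittag-Leffler-type series $m\sum_l\|\psi^{N,\star l}\|_1$ directly once the summable bound on $\|\psi^{N,\star l}\|_1$ is established.

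The main obstacle is the estimate in the second paragraph: establishing the short-lag smallness, and the consequent decay of the iterated-kernel masses, \emph{uniformly in the mesh}. Unlike the continuous setting, the discrete convolution entangles the singular behaviour of $\Psi^N$ with the grid, so I must control $\sum_{j=1}^{L}\Psi^N(t_j)\tau$ as $\tau=T/K\to0$ and ensure that neither $\delta$ nor $C$ depends on $K$ or $N$ — this is where the weakly singular structure supplied by the applications, rather than $\alpha(T)<\infty$ by itself, must be used. Everything else — the induction for the maximal solution, the convolution iteration, and the geometric accumulation over windows — is routine once this uniform singular-kernel estimate is in place.
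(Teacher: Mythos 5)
Your route is genuinely different from the paper's. The paper's proof is a short exponential-weighting argument: set $f^N_\mu(k):=e^{-\mu t_k}f^N(k)$ and $\Psi^N_\mu(t):=e^{-\mu t}\Psi^N(t)$, observe that the hypothesis keeps its form under this substitution, that $\alpha_\mu(T):=\sup_{N\in\nn_+}\sup_{K\in\nn_+}\sum_{i=1}^K\Psi^N_\mu(t_i)\tau$ is nonincreasing in $\mu$, and that (so the paper claims) $\lim_{\mu\to\infty}\alpha_\mu(T)=0$; one then fixes $\mu_0$ with $\alpha_{\mu_0}(T)\le\frac12$, absorbs to get $\sup_{0\le k\le K}f^N_{\mu_0}(k)\le 2m$, and reads off $f^N(k)\le 2me^{\mu_0 T}$. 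Your two ingredients --- uniform short-lag smallness of the kernel mass and the geometric recursion over windows of length $\delta$ --- are compressed in the paper into the single choice of $\mu_0$; the weighting trick is shorter and produces the constant $2me^{\mu T}$ directly, and it makes your renewal/convolution-iteration preamble unnecessary. Granted your flagged short-lag hypothesis, your windowing argument is sound.

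The more important point is that the caveat you raise is genuine, and it afflicts the paper's proof as well: your short-lag condition and the paper's claim $\lim_{\mu\to\infty}\alpha_\mu(T)=0$ are equivalent up to constants (one direction via $\alpha_\mu(T)\le\sup_{N,K}\sum_{t_i\le\delta}\Psi^N(t_i)\tau+e^{-\mu\delta}\alpha(T)$, the other via $\sum_{t_i\le\delta}\Psi^N(t_i)\tau\le e^{\mu\delta}\alpha_\mu(T)$), and neither follows from \eqref{Gron_R} alone, because the supremum over $K$ does not commute with the limit $\mu\to\infty$: the kernel mass may concentrate at the single lag $t_1=\tau\to 0$. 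Concretely, let $\Psi^N=\Psi$ for all $N$, with $\Psi(T/K):=2K/T$ for every prime $K$ and $\Psi:=0$ elsewhere. Then $\sum_{i=1}^K\Psi(t_i)\tau=\frac2K\sum_{p\mid K,\ p\ \mathrm{prime}}p\le 2$ for every $K$, so \eqref{Gron_R} holds with $\alpha(T)\le 2$; but for prime $K$ the only nonzero weight is $\Psi(t_1)\tau=2$, so the hypothesis admits $f^N(k)=m+2f^N(k-1)$, $f^N(0)=m$, whence $f^N(K)\ge 2^K m\to\infty$ along primes and no bound $2me^{\mu T}$ uniform in $K$ can hold; correspondingly $\alpha_\mu(T)\ge 2e^{-\mu T/K}\to 2$, so no admissible $\mu_0$ exists. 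The lemma is thus only true --- and both proofs only work --- under the extra structural input you isolate, e.g.\ that $\Psi^N$ is dominated, uniformly in $N$, by a fixed nonincreasing integrable function $\psi$, which gives $\sup_{N,K}\sum_{t_i\le\delta}\Psi^N(t_i)\tau\le\int_0^\delta\psi(s)\,{\rm d}s\to 0$ as $\delta\to 0$; this is satisfied in the application of Section \ref{sec4}, where $\Psi^N=K_{F^N}+K^2_{G^N}\le K_F+K^2_G\le C(1+t^{-\frac12})$. So your proposal does not prove the lemma exactly as stated, but neither does the paper: you have made explicit a hypothesis that the paper's proof uses silently at the unjustified step $\lim_{\mu\to\infty}\alpha_\mu(T)=0$.
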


\begin{proof}
	For any $\mu\ge 0$, it holds that 
	\begin{align*}
	e^{-\mu t_k}f^N(k)\le m + \sum_{i=0}^{k-1} e^{-\mu(t_k-t_i)}\Psi^N(t_k-t_i)e^{-\mu t_i}f^N(i)\tau.
	\end{align*}
	Denoting $f^N_\mu(k):=e^{-\mu t_k}f^N(k)$ and $\Psi^N_{\mu}(t):=e^{-\mu t}\Psi^N(t)$, we have
	\begin{align*}
	f^N_\mu(k)\le m + \sum_{i=0}^{k-1} \Psi^N_{\mu}(t_k-t_i)f^N_\mu(i)\tau.
	\end{align*}
	From condition \eqref{Gron_R}, 
	we get
	\begin{align*}
	\alpha_\mu(T):=\sup_{N\in \nn_+} \sup_{K\in \nn_+}\sum_{i=1}^{K} \Psi^N_{\mu}(t_i)\tau
	\end{align*}
	decreases with respect to $\mu$. Since
	\begin{align*}
	\lim_{\mu\rightarrow 0}\alpha_\mu(T)\le\alpha(T)
	\quad \text{and}\quad 
	\lim_{\mu\rightarrow \infty}\alpha_\mu(T)=0,
	\end{align*}
	there exists a constant $\mu_0$ independent of $K$ and $N$ such that 	
	$\alpha_{\mu_0}(T)\le\frac1{2}$.
	As a consequence,
	\begin{align*}
	f^N_{\mu_0}(k)
	\le m+ \alpha_{\mu_0}(T) \sup_{0\le i\le k-1}f^N_{\mu_0}(i) \le m+ \frac12\sup_{0\le i\le K}f^N_{\mu_0}(i),
	\end{align*}
	from which we obtain $\sup_{0\le k\le K}f^N_{\mu_0}(k)\le 2m$.
	This completes the proof.
\end{proof}

The next theorem shows the strong convergence rate of the fully discrete scheme for Eq. \eqref{spde}.

\begin{theorem} \label{main} 
	Let $p\ge 2$ and $\beta\ge\theta\ge 0$. If Assumptions \ref{ap-con}--\ref{ap-xn-hol} hold and the sequence of functions $K_{F^N}+K^2_{G^N}$ satisfies condition \eqref{Gron_R},
	then there exists a constant $C$ such that 
	\begin{align} \label{x-xnk}
	\sup_{0\le k \le K}\|X(t_k)-X^N_k\|_{\mathbb L^p(\Omega;H)}
	\le C (N^{-(r_0 \wedge r_F \wedge r_G)}+\tau^{\gamma \wedge \eta_F \wedge \eta_G}).
	\end{align}
\end{theorem}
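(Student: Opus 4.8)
The plan is to combine the spatial error already established in Theorem~\ref{x-xn} with a genuinely temporal estimate via the triangle inequality
\[
\|X(t_k)-X^N_k\|_{\mathbb L^p(\Omega;H)}\le \|X(t_k)-X^N(t_k)\|_{\mathbb L^p(\Omega;H)}+\|X^N(t_k)-X^N_k\|_{\mathbb L^p(\Omega;H)}.
\]
The first term is bounded by $CN^{-(r_0\wedge r_F\wedge r_G)}$ uniformly in $k$ by Theorem~\ref{x-xn}, so the whole task reduces to showing that the temporal error $e^N_k:=\|X^N(t_k)-X^N_k\|_{\mathbb L^p(\Omega;H)}$ is $O(\tau^{\gamma\wedge\eta_F\wedge\eta_G})$ uniformly in $N$ and $k$. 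Since scheme~\eqref{scheme} and the mild solution of \eqref{spdeG} share the same initial datum and the same leading term $S^N(t_k)X^N_0$, we have $e^N_0=0$ and the initial contribution cancels.

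Writing $X^N(t_k)$ in mild form and $X^N_k$ in the equivalent convolution form displayed after \eqref{scheme}, I would express the drift part of $X^N(t_k)-X^N_k$ as $\sum_{i=0}^{k-1}\int_{t_i}^{t_{i+1}}\big[S^N(t_k-r)F^N(X^N(r))-S^N(t_k-t_i)F^N(X^N_i)\big]\,\mathrm dr$ and split each integrand as
\[
\big(S^N(t_k-r)-S^N(t_k-t_i)\big)F^N(X^N(r))+S^N(t_k-t_i)\big(F^N(X^N(r))-F^N(X^N_i)\big),
\]
treating the diffusion part analogously inside the stochastic integral. The key observation that unlocks the estimate is that for $r\in(t_i,t_{i+1}]$ one has $t_k-t_i=\lceil t_k-r\rceil_\tau$, so the first (semigroup-difference) piece is exactly of the form controlled by Assumption~\ref{ap-tem}, contributing $R_{F^{N,\tau}}(t_k-r)(1+\|X^N(r)\|_\theta)$ and, after the uniform moment bound \eqref{welln} of Lemma~\ref{lm31}, a term of size $C\tau^{\eta_F}$ (resp. $C\tau^{\eta_G}$ for the noise). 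For the second piece I would apply Assumption~\ref{ap-spe} to bound the quantity by $K_{F^N}(t_k-t_i)\|X^N(r)-X^N_i\|$ and then split $\|X^N(r)-X^N_i\|\le\|X^N(r)-X^N(t_i)\|+\|X^N(t_i)-X^N_i\|$; the first summand is $\le C\tau^\gamma$ by the uniform H\"older regularity of Assumption~\ref{ap-xn-hol}, while the second is precisely $e^N_i$, producing the recursive structure.

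Assembling the two pieces for drift and diffusion and squaring, I would mimic the Cauchy--Schwarz/H\"older manipulation used in the proof of Theorem~\ref{x-xn} (together with the Burkholder--Davis--Gundy inequality for the stochastic convolution) to reach a recursion of the form
\[
(e^N_k)^2\le C\tau^{2(\gamma\wedge\eta_F\wedge\eta_G)}+C\sum_{i=0}^{k-1}\Psi^N(t_k-t_i)(e^N_i)^2\,\tau,
\]
where $\Psi^N$ is a fixed multiple of $K_{F^N}+K^2_{G^N}$, which satisfies the summability condition \eqref{Gron_R} by hypothesis; here the finite sum $\sum_{i=0}^{k-1}K_{F^N}(t_k-t_i)\tau=\sum_{j=1}^{k}K_{F^N}(t_j)\tau$ is uniformly bounded, and it is this bound that absorbs the $\tau^\gamma$ factors. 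Applying the discrete singular Gronwall inequality of Lemma~\ref{Gron} with $f^N(k)=(e^N_k)^2$ and $m=C\tau^{2(\gamma\wedge\eta_F\wedge\eta_G)}$ yields $\sup_{N,k}(e^N_k)^2\le 2me^{\mu T}$, hence $e^N_k\le C\tau^{\gamma\wedge\eta_F\wedge\eta_G}$, and combining with the spatial bound gives \eqref{x-xnk}. The main obstacle is the bookkeeping in the temporal split: one must correctly identify $t_k-t_i$ with $\lceil t_k-r\rceil_\tau$ so that Assumption~\ref{ap-tem} applies, and then organize the three contributions (semigroup difference, time-H\"older increment, recursive error) for both drift and diffusion so that, after BDG and Cauchy--Schwarz, everything collapses into a single recursion matching the hypotheses of Lemma~\ref{Gron}.
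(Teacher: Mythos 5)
Your proposal is correct and follows essentially the same route as the paper's proof: reduction via Theorem~\ref{x-xn} and the triangle inequality, a three-way splitting of drift and diffusion errors (semigroup difference via the identification $t_k-t_i=\lceil t_k-r\rceil_\tau$ and Assumption~\ref{ap-tem}, H\"older increment via Assumption~\ref{ap-xn-hol}, recursive error via Assumption~\ref{ap-spe}), followed by Burkholder--Davis--Gundy and Cauchy--Schwarz to obtain the squared recursion, and the discrete singular Gronwall Lemma~\ref{Gron}. The only cosmetic difference is that you apply Assumption~\ref{ap-spe} before splitting $\|X^N(r)-X^N_i\|$ into H\"older and recursive parts, whereas the paper splits the arguments of $F^N$, $G^N$ first; the two orderings are equivalent.
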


\begin{proof}
	In terms of Theorem \ref{x-xn} and the triangle inequality, it suffices to show that 
	\begin{align} \label{xn-xnk}
	\sup_{0\le k \le K}\|X^N(t_k)-X^N_k\|_{\mathbb L^p(\Omega;H)}
	\le C \tau^{\gamma \wedge \eta_F \wedge \eta_G}.
	\end{align}
	
	For any $k=1,\cdots,K$, the error satisfies 
	\begin{align*}
	\left\|X^N(t_k)-X^N_k\right\|_{\mathbb L^p(\Omega;H)}\le &
	\left\| \sum_{i=0}^{k-1}\int_{t_i}^{t_{i+1}}\left(S^N(t_k-r)F^N(X^N(r))-S^N(t_k-t_i)F^N(X^N_i) \right){\rm d}r\right\|_{\mathbb L^p(\Omega;H)}\\
	&+\left\|\sum_{i=0}^{k-1}\int_{t_i}^{t_{i+1}}\left(S^N(t_k-r)G^N(X^N(r))-S^N(t_k-t_i)G^N(X^N_i)\right)  {\rm d} W(r)\right\|_{\mathbb L^p(\Omega;H)}\\
	=&:J_1+J_2.
	\end{align*}
	
	Using the Minkowski inequality, we have
	\begin{align*}
	J_1
	\le & \sum_{i=0}^{k-1}\int_{t_i}^{t_{i+1}} \|(S^N(t_k-r)-S^N(t_k-t_i))F^N(X^N(r))\|_{\mathbb L^p(\Omega;H)} {\rm d}r \\
	&+\sum_{i=0}^{k-1}\int_{t_i}^{t_{i+1}} \|S^N(t_k-t_i)(F^N(X^N(r))-F^N(X^N(t_i)))\|_{\mathbb L^p(\Omega;H)} {\rm d}r  \\
	&+\sum_{i=0}^{k-1}\int_{t_i}^{t_{i+1}} \|S^N(t_k-t_i)(F^N(X^N(t_i))-F^N(X^N_i))\|_{\mathbb L^p(\Omega;H)} {\rm d}r \\
	= &:J_{11}+J_{12}+J_{13}.
	\end{align*}
	For the term $J_{11}$, changing of variation and Assumptions \ref{ap-spe} and \ref{ap-tem} lead to
	\begin{align*}
	J_{11} 
	&\le  \sum_{i=0}^{k-1}\int_{t_i}^{t_{i+1}}\|(S^N(r)-S^N(\lceil t \rceil_\tau))F^N(X^N(t_k-r))\|_{\mathbb L^p(\Omega;H)}{\rm d}r\\
	&\le  \sum_{i=0}^{k-1}\int_{t_i}^{t_{i+1}}R_{F^{N,\tau}}(r)\left(1+\|X^N(t_k-r)\|_{\mathbb L^p(\Omega;\dot{H}^\theta)}\right){\rm d}r\\
	&\le  \left(\int_0^{t_k} R_{F^{N,\tau}}(r) {\rm d}r\right)
	\left(1+\sup_{t \in [0,T]}\|X^N(t)\|_{\mathbb L^p(\Omega;\dot{H}^\theta)}\right)
	\le C \tau^{\eta_F}.
	\end{align*}
	Assumptions \ref{ap-spe} and \ref{ap-xn-hol} imply
	\begin{align*}
	J_{12}
	&\le  \sum_{i=0}^{k-1}\int_{t_i}^{t_{i+1}}K_{F^N}(t_k-t_i)\|X^N(r)-X^N(t_i)\|_{\mathbb L^p(\Omega;H)}{\rm d}r \\
	&\le  C \tau^\gamma \bigg(\sum_{i=1}^{k}K_{F^N}(t_i)\tau\bigg).
	\end{align*}
	According to the H\"older inequality and Assumption \ref{ap-spe}, we get
	\begin{align*}
	J_{13} 
	&\le  \sum_{i=0}^{k-1} K_{F^N}(t_k-t_i)
	\|X^N(t_i)-X^N_i\|_{\mathbb L^p(\Omega;H)} \tau \\
	&\le  C\bigg(\sum_{i=1}^{k} K_{F^N}(t_i)\tau\bigg)^\frac12
	\bigg(\sum_{i=0}^{k-1} K_{F^N}(t_k-t_i)\|X^N(t_i)-X^N_i\|^2_{\mathbb L^p(\Omega;H)} \tau\bigg)^\frac12 \\
	&\le  C \bigg(\sum_{i=0}^{k-1} K_{F^N}(t_k-t_i)\|X^N(t_i)-X^N_i\|^2_{\mathbb L^p(\Omega;H)} \tau \bigg)^\frac12.
	\end{align*}
	Collecting the above estimations for $J_{11}$, $J_{12}$ and $J_{13}$ together, we have that
	\begin{align} \label{J1}
	J^2_1
	\le C \tau^{2(\gamma\wedge \eta_F)}
	+C\sum_{i=0}^{k-1} K_{F^N}(t_k-t_i)\|X^N(t_i)-X^N_i\|^2_{\mathbb L^p(\Omega;H)} \tau.
	\end{align}
	
	To estimate $J_2$, we utilize the Burkholder--Davis--Gundy inequality and similar arguments as in the estimations of $J_{11}$, $J_{12}$ and $J_{13}$ to show
	\begin{align*}
	J^2_2
	\le& C \sum_{i=0}^{k-1}\int_{t_i}^{t_{i+1}}
	\|(S^N(t_k-r)-S^N(t_k-t_i))G^N(X^N(r))\|^2_{\mathbb L^p(\Omega;\LL_2^0)} {\rm d}r \\
	&+C\sum_{i=0}^{k-1}\int_{t_i}^{t_{i+1}}
	\|S^N(t_k-t_i)(G^N(X^N(r))-G^N(X^N(t_i)))\|^2_{\mathbb L^p(\Omega;\LL_2^0)} {\rm d}r \\
	&+C\sum_{i=0}^{k-1}\int_{t_i}^{t_{i+1}}
	\|S^N(t_k-t_i)(G^N(X^N(t_i))-G^N(X^N_i))\|^2_{\mathbb L^p(\Omega;\LL_2^0)} {\rm d}r \\
	\le & C \bigg(\int_0^{t_k} R^2_{G^{N,\tau}}(r) {\rm d}r\bigg)
	\bigg(1+\sup_{t \in [0,T]}\|X^N(t)\|^2_{\mathbb L^p(\Omega;\dot{H}^\theta)}\bigg)  \\
	&+C \tau^{2\gamma} \bigg(\sum_{i=1}^{k}K^2_{G^N}(t_i) \tau\bigg)
	+C\sum_{i=0}^{k-1} K^2_{G^N}(t_k-t_i)\|X^N(t_i)-X^N_i\|^2_{\mathbb L^p(\Omega;H)} \tau.
	\end{align*}
	Therefore, it holds that
	\begin{align} \label{J2}
	J_2^2
	\le C \tau^{2(\gamma\wedge \eta_G)}
	+C\sum_{i=0}^{k-1} K^2_{G^N}(t_k-t_i)\|X^N(t_i)-X^N_i\|^2_{\mathbb L^p(\Omega;H)} \tau.
	\end{align}
	
	Combining \eqref{J1}-\eqref{J2}, we obtain
	\begin{align*}
	\|X^N(t_k)-X^N_k\|^2_{\mathbb L^p(\Omega;H)}\le C \tau^{2(\gamma \wedge \eta_F \wedge \eta_G)}+ 
	C \sum_{i=0}^{k-1} \Psi^N(t_k-t_i) 
	\|X^N(t_i)-X^N_i\|^2_{\mathbb L^p(\Omega;H)} \tau
	\end{align*}
	with $\Psi^N=K_{F^N}+K^2_{G^N}$, $N\in \nn_+$.
	As a consequence, we conclude \eqref{x-xnk} by Lemma \ref{Gron}.
\end{proof}


\section{Application}
\label{sec4}

In this section, we apply our results to the following second-order parabolic SPDE driven by the space-time white noise
\begin{align}\label{she}\tag{SHE}
\begin{split}
&{\rm d} X(t,\xi)
=(\Delta X(t,\xi)+\nabla f(X(t,\xi))+\widetilde f(X(t,\xi))) {\rm d}t +g(X(t,\xi)) {\rm d}W(t,\xi),\\
&X(t,\xi)=0, \quad (t,\xi)\in [0,T]\times \partial \OOO,\\
&X(0,\xi)=X_0(\xi), \quad \xi\in \OOO,
\end{split}
\end{align}
where
$\OOO\subset \rr$ is a bounded open set with Lipschitz boundary and ${\bf Q}={\rm Id}_H$, i.e., $W$ is the cylindrical Wiener process.
Moreover, it is assumed that $f,\widetilde f, g:\rr \rightarrow \rr$ are Lipschitz continuous functions with Lipschitz constants $L_f,L_{\widetilde f},L_g\ge 0$, i.e., for any $\xi_1$, $\xi_2\in \rr$,
\begin{align}\label{lip}
\begin{split}
|f(\xi_1)-f(\xi_2)|&\le L_f |\xi_1-\xi_2|, \\
|\widetilde f(\xi_1)-\widetilde f(\xi_2)|
&\le L_{\widetilde f} |\xi_1-\xi_2|,\\
|g(\xi_1)-g(\xi_2)|& \le L_g |\xi_1-\xi_2|.
\end{split}
\end{align}

Let $H=U=\mathbb L^2(\OOO)$ and define $A=\Delta$ with $\text{Dom}(A)=H_0^1(\OOO)\cap H^2(\OOO)$, where $H_0^1(\OOO):=\{f\in H^1(\OOO):\ f|_{\partial \OOO}=0\}$. 
It implies that the self-adjoint and positive definite operator $(-A)$ possesses an eigensystem $\{(\lambda_j,e_j)\}_{j\in \nn_+}$ with $\{\lambda_j\}_{j\in \nn_+}$ being an increasing sequence and $\{e_j\}_{j\in \nn_+}$ forming an orthonormal basis of $H$. 
It is known from Weyl's law (see, e.g., \cite{Gri02(CPDE)}) that 
\begin{align} \label{weyl}
\lambda_j\simeq j^2
\quad \text{and}\quad 
\|e_j\|_{\mathbb L^{\infty}(\OOO)}\le C.
\end{align} 
Define the operators $F_f:H\rightarrow \dot{H}^{-1},F_{\widetilde f}:H\rightarrow H$ and $G:H\rightarrow \LL(H)$ by Nemytskii operators associated with $\nabla f,\widetilde f$ and $g$, respectively: 
\begin{align*}
F_f(z)(\xi):=\nabla f(z(\xi)),\quad
F_{\widetilde f}(z)(\xi):=\widetilde f(z(\xi)),\quad
G(z)u(\xi):=g(z(\xi)) u(\xi),
\end{align*}
where $z\in H$, $\xi\in \OOO$ and $u\in H$.

Denoting $F(z):=F_f(z)+F_{\widetilde f}(z)$, we fit Eq. \eqref{she} into the framework considered in previous two sections with Assumption \ref{ap-con} for $\theta\in[0,1/2)$. Indeed, by the Lipschitz continuity of $f$ and $\widetilde f$, we have
\begin{align}
\|S(t) (F(x)-F(y))\|
\le& \|(-A)^\frac12 S(t) (-A)^{-\frac12} (F_f(x)-F_f(y))\|
+\|S(t) (F_{\widetilde f}(x)-F_{\widetilde f}(y))\| \nonumber\\
\le& \|(-A)^\frac{1}2 S(t)\|_{\LL(H)} \|F_{\widetilde f}(x)-F_{\widetilde f}(y)\|_{-1}
+\|S(t)\|_{\LL(H)} \|F_{\widetilde f}(x)-F_{\widetilde f}(y)\| \nonumber\\
\le& \left(L_f \|(-A)^{1/2} S(t)\|_{\LL(H)}+L_{\widetilde f}\|S(t)\|_{\LL(H)}\right)\|x-y\|\nonumber\\
\le &C(t^{-1/2}+1)\|x-y\|=: K_F(t)\|x-y\|.\label{kf0}
\end{align}
Similarly, we obtain that for any $z\in H$,
\begin{align}
\|S(t) F(z)\|_\theta 
\le& \|(-A)^\frac{1+\theta}2 S(t) (-A)^{-\frac12} F_f(z)\|
+\|(-A)^\frac\theta2 S(t) F_{\widetilde f}(z)\| \nonumber\\
\le& C(t^{-\frac{1+\theta}2}+t^{-\frac\theta2})(1+\|z\|)  =: K_{F_\theta}(t)(1+\|z\|).\label{kf}
\end{align}
Then the functions $K_F$ and $K_{F_\theta}$ are integrable on $[0,T]$. Concerning the diffusion term, since the Lipschitz continuity (and thus linear growth) of $g$ yields that
\begin{align*}  
\|S(t)(G(x)-G(y))\|^2_{\LL_2^0}
&=\sum_{j=1}^{\infty} \|S(t)(G(x)-G(y))e_j\|^2
\le C\sum_{j=1}^{\infty} e^{-2\lambda_j t} (1+\|x-y\|)^2
\end{align*}
and 
\begin{align*}
\|S(t)G(z)\|^2_{\LL^\theta_2}
&=\sum_{j=1}^{\infty}\lambda_j^\theta e^{-2\lambda_j t} \|G(z)e_j\|^2
\le C\sum_{j=1}^{\infty}\lambda_j^\theta e^{-2\lambda_j t} (1+\|z\|)^2,
\end{align*}
we define 
\begin{align} \label{kg-whi0}
K_G(t):=\left(C \sum_{j=1}^\infty e^{-2\lambda_j t}\right)^{\frac12}
\quad \text{and}\quad 
K_{G_\theta}(t):=\bigg(C \sum_{j=1}^{\infty}\lambda_j^\theta e^{-2\lambda_j t} \bigg)^\frac12.
\end{align} 
Moreover, we deduce
\begin{align*} 
\int_{0}^{T}K^2_G(t) {\rm d} t&\le C  \int_{0}^{T} t^{-\frac12} {\rm d} t\le C T^{\frac12},\\
\int_{0}^{T} K^2_{G_\theta}(t){\rm d} t& \le C \sum_{j=1}^{\infty}\lambda_j^\theta \left( \int_{0}^{T}  e^{-2\lambda_j t} {\rm d} t \right)\le C \sum_{j=1}^{\infty}\lambda_j^{\theta-1},
\end{align*} 
then $K_G$ and $K_{G_\theta}$ are square integrable on $[0,T]$ for any $\theta\in[0,\frac12)$.
By Lemma \ref{lm-well},	Eq. \eqref{she} possesses a unique mild solution $\{X(t):\ t\in [0,T]\}$ in $\CC([0,T]; \mathbb L^p(\Omega; \dot{H}^\theta))$ as soon as $X_0\in \mathbb L^p(\Omega;\dot{H}^\theta)$,  $p\ge 2$.

We use the spectral Galerkin method to construct the perturbation equation \eqref{spdeG}.
More precisely, for $N\in \nn_+$, let $V_N={\rm span}\{e_1,\cdots,e_N\}$ and $P_N$ be the projection from $H$ to $V_N$. 
Set $X^N_0=P_NX_0$, $A^N=AP_N$, $F^N=P_NF$ and $G^N=P_NG$, $N\in \nn_+$. 
Since $P_N$ is a contraction operator, one can take $K_{F^N}= K_F$, $K_{F^N_\theta}= K_{F_\theta}$, $K_{G^N}= K_G$ and $K_{G^N_\theta}= K_{G_\theta}$.
Thus Assumption \ref{ap-spe} holds and functions $\{K_{F^N}+K^2_{G^N}\}_{N\in\mathbb{N}_+}$ satisfies condition \eqref{Gron_R}.

To obtain the strong convergence rate of the fully discrete scheme \eqref{scheme}, we give the maximal values of $r_0$ in Assumption \ref{ap-spe-X0}, $r_F,r_G$ in Assumption \ref{ap-spe-ord}, $\eta_F,\eta_G$ in Assumption \ref{ap-tem},  and  $\gamma$ in Assumption \ref{ap-xn-hol}, respectively.

\begin{lemma} \label{ap-whi}
	Let $p\ge 2$ and $\beta\in (0,2]$. 
	Suppose that $X_0:\Omega\rightarrow \dot{H}^\beta$ is   $\FFF_0/\mathcal{B}(\dot{H}^\beta)$-measurable such that 
	$X_0\in \mathbb L^p(\Omega;\dot{H}^\beta)$.
	Then for Eq. \eqref{she}, Assumptions \ref{ap-spe-X0}-\ref{ap-tem}  hold  with
	\begin{align*}
	r_0=\beta, \quad 
	r_F=1-\epsilon,\quad\ r_G=\frac12, \quad
	\eta_F=\frac12-\epsilon,\quad \eta_G=\frac14,
	\end{align*}
	where $\epsilon$ is an arbitrary small positive number.
\end{lemma}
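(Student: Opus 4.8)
The plan is to verify the three assumptions one at a time, exploiting that in the eigenbasis $\{e_j\}$ all relevant operators are diagonal: $S(t)e_j=e^{-\lambda_j t}e_j$ and $S^N(t)P_N=\sum_{j\le N}e^{-\lambda_j t}\langle\,\cdot\,,e_j\rangle e_j$, so the spatial-discretization operator collapses to $S^N(t)P_N-S(t)=-(I-P_N)S(t)$, while the temporal one factorizes as $S^N(t)-S^N(\lceil t\rceil_\tau)=S^N(t)(I-S^N(\sigma))$ with $\sigma:=\lceil t\rceil_\tau-t\in[0,\tau]$. Two structural facts will be used repeatedly: on $V_N$ one has $A^N=A$, so the analyticity estimates \eqref{ana} hold for $S^N$ with constants independent of $N$; and, by \eqref{lip} (as in the derivation of \eqref{kf0}--\eqref{kf}), the growth bounds $\|F_f(z)\|_{-1}\le C(1+\|z\|)$, $\|F_{\widetilde f}(z)\|\le C(1+\|z\|)$ and $\|g(z)\|\le C(1+\|z\|)$ hold. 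Since $\theta\ge0$ gives $1+\|z\|\le 1+\|z\|_\theta$, it suffices to produce in each case a kernel with the prescribed integrability in $t$.

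For Assumption \ref{ap-spe-X0} I would write $S^N(t)X^N_0-S(t)X_0=-(I-P_N)S(t)X_0$ and bound $\|(I-P_N)S(t)X_0\|\le\|(I-P_N)(-A)^{-\beta/2}\|_{\LL(H)}\|X_0\|_\beta\le\lambda_{N+1}^{-\beta/2}\|X_0\|_\beta$; since $\lambda_{N+1}\simeq N^2$ by \eqref{weyl}, taking $\mathbb L^p(\Omega)$-norms gives $r_0=\beta$. For the drift part of Assumption \ref{ap-spe-ord} I would split $F=F_f+F_{\widetilde f}$ and, for $\delta\in(0,1)$, redistribute fractional powers, e.g. for the gradient term
\[
(I-P_N)S(t)F_f(z)=\big[(I-P_N)(-A)^{-\delta/2}\big]\big[(-A)^{\frac{1+\delta}2}S(t)\big]\big[(-A)^{-\frac12}F_f(z)\big],
\]
whose factors are controlled by $N^{-\delta}$, $Ct^{-\frac{1+\delta}2}$ and $C(1+\|z\|)$ respectively; as $\tfrac{1+\delta}2<1$ the kernel is integrable, so $\int_0^TR_{F^N}\le CN^{-\delta}$ and letting $\delta\uparrow1$ yields $r_F=1-\epsilon$ (the term $F_{\widetilde f}$ is even smoother and never binding). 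For the diffusion part I would compute the Hilbert--Schmidt norm directly, obtaining $\|(I-P_N)S(t)G(z)\|_{\LL_2^0}^2=\sum_{j>N}e^{-2\lambda_j t}\|g(z)e_j\|^2\le C(1+\|z\|)^2\sum_{j>N}e^{-2\lambda_j t}$ via $\|e_j\|_{\mathbb L^\infty(\OOO)}\le C$, and then $\int_0^T\sum_{j>N}e^{-2\lambda_j t}\,{\rm d}t\le C\sum_{j>N}\lambda_j^{-1}\simeq N^{-1}$, i.e. $r_G=\tfrac12$.

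Assumption \ref{ap-tem} follows the same bookkeeping with $I-S^N(\sigma)$ in place of $I-P_N$, now using $\|(-A^N)^{-\delta/2}(I-S^N(\sigma))\|_{\LL(V_N)}\le C\tau^{\delta/2}$; the gradient term then produces the kernel $C\tau^{\delta/2}t^{-\frac{1+\delta}2}$, integrable for $\delta<1$, so again $\delta\uparrow1$ gives $\eta_F=\tfrac12-\epsilon$. The step I expect to be the main obstacle is the sharp diffusion estimate $\eta_G=\tfrac14$: the Hilbert--Schmidt computation leads to
\[
\int_0^TR^2_{G^{N,\tau}}(t)\,{\rm d}t\le C\sum_{j\le N}(1\wedge\lambda_j\tau)^2\int_0^Te^{-2\lambda_j t}\,{\rm d}t\le C\sum_{j\le N}\frac{(1\wedge\lambda_j\tau)^2}{\lambda_j},
\]
and here the naive estimate $1-e^{-\lambda_j\sigma}\le(\lambda_j\tau)^{1/4}$ is fatal, since it leaves the divergent series $\sum_j\lambda_j^{-1/2}$. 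Instead I would split the sum at the critical frequency $\lambda_j\sim\tau^{-1}$: the low-frequency block contributes $\tau^2\sum_{\lambda_j\le\tau^{-1}}\lambda_j\simeq\tau^2\cdot\tau^{-3/2}=\tau^{1/2}$ and the high-frequency block contributes $\sum_{\lambda_j>\tau^{-1}}\lambda_j^{-1}\simeq\tau^{1/2}$, both uniformly in $N$, so $(\int_0^TR^2_{G^{N,\tau}})^{1/2}\le C\tau^{1/4}$ and $\eta_G=\tfrac14$. The two points demanding care are precisely this frequency splitting and the verification that every constant is independent of $N$ (which rests on $A^N=A$ on $V_N$); the remainder is a routine application of \eqref{ana}, \eqref{weyl} and \eqref{lip}.
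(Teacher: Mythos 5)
Your proposal is correct and follows essentially the same route as the paper: the projection estimate $\|(P_N-{\rm Id}_H)z\|\le\lambda_{N+1}^{-\mu/2}\|z\|_\mu$ with Weyl's law for $r_0$ and $r_F$, the smoothing bound \eqref{kf} (which is exactly your fractional-power redistribution) for $r_F$ and $\eta_F$, and direct Hilbert--Schmidt computations with $\|e_j\|_{\mathbb L^\infty(\OOO)}\le C$ plus the $(1\wedge\lambda_j\tau)$ bound for $r_G$ and $\eta_G$. The only difference is one of exposition: you spell out the frequency splitting at $\lambda_j\sim\tau^{-1}$ behind $\sum_j\lambda_j^{-1}(\tau^2\lambda_j^2\wedge1)\le C\tau^{1/2}$, which the paper asserts without detail.
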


\begin{proof}
	Recall the standard estimation
	\begin{align*} 
	\|(P_N-{\rm Id}_H)z\|_{\nu}\le \lambda_{N+1}^{\frac{\nu-\mu}{2}}\|z\|_{\mu},
	\quad \forall~z\in \dot{H}^{\mu},\ \nu,\mu\in \rr.
	\end{align*}
	We know
	\begin{align*}
	\|S^N(t)X^N_0-S(t)X_0\|
	\le C \|(P_N-{\rm Id}_H)X_0\|
	\le C\lambda_{N+1}^{-\frac\beta2}\|X_0\|_\beta,
	\end{align*}
	and then $r_0=\beta$.
	
	The estimation \eqref{kf} shows that 
	\begin{align*}
	\|S^N(t)F^N(z)-S(t)F(z)\|
	&=\|(P_N-{\rm Id}_H) S(t) F(z)\|\\
	&\le C \lambda_{N+1}^{-\frac{\tilde{\theta}}{2}} \|S(t) F(z)\|_{\tilde{\theta}} \\
	&\le C \lambda_{N+1}^{-\frac{\tilde{\theta}}{2}} K_{F_{\tilde{\theta}}} (t) (1+\|z\|)
	\end{align*}
	and
	\begin{align*}
	\|(S^N(t)-S^N(\lceil t \rceil_\tau)F^N(z)\|
	&=\|P^N({\rm Id}_H-S(\lceil t \rceil_\tau-t)) S(t) F(z)\| \\
	&\le C (\lceil t \rceil_\tau-t)^{\frac{\tilde{\theta}}{2}} \|S(t) F(z)\|_{\tilde{\theta}} \\
	&\le C \tau^{\frac{\tilde{\theta}}{2}}K_{F_{\tilde{\theta}}}(t)(1+\|z\|).
	\end{align*}
	One can take $\tilde{\theta}\in(0,1)$ to obtain two integrable functions $R_{F^N}(t):=C \lambda_{N+1}^{-\frac{\tilde{\theta}}{2}} K_{F_{\tilde{\theta}}} (t)$ and $R_{F^{N,\tau}}(t):=C \tau^{\frac{\tilde{\theta}}{2}}K_{F_{\tilde{\theta}}}(t)$.
	Therefore, $r_F=\tilde{\theta}<1$ and $\eta_F=\frac{\tilde{\theta}}{2}<\frac12$.
	
	To estimate the diffusion part, we have
	\begin{align*}
	\|S^N(t)G^N(z)-S(t)G(z)\|^2_{\LL^0_2}
	= \sum_{j=N+1}^{\infty}e^{-2\lambda_j t}\|G(z)e_j\|^2
	\le C\sum_{j=N+1}^{\infty}e^{-2\lambda_j t} (1+\|z\|)^2
	\end{align*}
	and 
	\begin{align*}
	\big\|(S^N(t)-S^N(\lceil t \rceil_\tau))G^N(z)\big\|^2_{\LL^0_2}
	&=\sum_{j=1}^N (e^{-\lambda_j t}-e^{-\lambda_j \lceil t \rceil_\tau})^2 
	\|G(z)e_{j}\|^2 \\
	&\le C \sum_{j=1}^\infty (e^{-\lambda_j t}-e^{-\lambda_j \lceil t \rceil_\tau})^2
	(1+\|z\|)^2.
	\end{align*}
	Denoting 
	\begin{align*} 
	R_{G^N}(t):=\left( C\sum_{j=N+1}^{\infty}e^{-2\lambda_j t}   \right)^{\frac12}
	\quad \text{and}\quad 
	R_{G^{N,\tau}}(t):=\bigg(   C \sum_{j=1}^\infty (e^{-\lambda_j t}-e^{-\lambda_j \lceil t \rceil_\tau})^2 \bigg)^\frac12,
	\end{align*} 
	we obtain that 
	\begin{align*}
	\int_0^T R^2_{G^N}(t){\rm d}t&\le C \sum_{j=N+1}^{\infty} j^{-2}\le C  N^{-1},\\
	\int_0^T R^2_{G^{N,\tau}}(t) {\rm d}t& \le C\sum_{j=1}^\infty \lambda_j^{-1} 
	(\tau^2 \lambda_j^{2} \wedge 1) \le C \tau^{\frac12}.
	\end{align*} 
	Consequently, we conclude that $r_G=\frac12$ and $\eta_G=\frac14$.
\end{proof}

\begin{lemma} \label{lm-hol}
	Let $p\ge 2$ and $\beta\in (0,2]$. 
	Suppose that $X_0:\Omega\rightarrow \dot{H}^\beta$ is   $\FFF_0/\mathcal{B}(\dot{H}^\beta)$-measurable such that 
	$X_0\in \mathbb L^p(\Omega;\dot{H}^\beta)$.
	Then Assumption \ref{ap-xn-hol} holds with  $\gamma=\frac\beta2\wedge \frac14$ .
\end{lemma}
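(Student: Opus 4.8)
The plan is to estimate $\|X^N(t)-X^N(s)\|_{\mathbb L^p(\Omega;H)}$ directly from the mild formulation and to track three contributions separately. Writing $X^N(t)-X^N(s)$ as the sum of $(S^N(t)-S^N(s))X^N_0$, the increment of the deterministic convolution $S^N*F^N(X^N)$, and the increment of the stochastic convolution $S^N\diamond G^N(X^N)$, I would split each of the two convolution increments into a ``diagonal'' piece over $[s,t]$ and a ``commutator'' piece over $[0,s]$ through $S^N(t-r)-S^N(s-r)=S^N(s-r)(S^N(t-s)-\mathrm{Id})$. Throughout, I would exploit that on $V_N$ one has $S^N=S$ and that $P_N$ commutes with $S$ and with the fractional powers of $-A$, so that the smoothing bounds \eqref{ana} transfer to $S^N$ with constants independent of $N$; the uniform moment bound $\sup_{N}\sup_{r\in[0,T]}\|X^N(r)\|_{\mathbb L^p(\Omega;H)}<\infty$ needed to close the estimates comes from Lemma \ref{lm31} (Assumption \ref{ap-spe} having been verified above).

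For the initial-value term I would use the third bound in \eqref{ana} with exponent $\beta/2\le1$, namely $\|(-A)^{-\beta/2}(S(t-s)-\mathrm{Id})\|_{\mathcal L(H)}\le C(t-s)^{\beta/2}$ (this is exactly where $\beta\le2$ is used), together with $\|X^N_0\|_\beta=\|(-A)^{\beta/2}P_NX_0\|\le\|X_0\|_\beta$; this yields the contribution $C(t-s)^{\beta/2}\|X_0\|_{\mathbb L^p(\Omega;\dot H^\beta)}$ and is the source of the exponent $\beta/2$. For the deterministic convolution I would use the growth bound $\|S(t-r)F(z)\|\le C((t-r)^{-1/2}+1)(1+\|z\|)$ underlying \eqref{kf0}, so that the $[s,t]$-piece is $O((t-s)^{1/2})$; for the $[0,s]$-piece I would insert fractional powers of $-A$ to turn $(S^N(t-s)-\mathrm{Id})$ into a factor $(t-s)^{\rho/2}$ against an integrable singularity $(s-r)^{-(1+\rho)/2}$, with $\rho<1$ for the $F_f$ part (which maps into $\dot H^{-1}$ and is the most singular). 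This gives a Hölder exponent arbitrarily close to $1/2$, hence strictly above $1/4$, so the deterministic convolution is never the binding term.

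The decisive term is the stochastic convolution, which I expect to be the main obstacle because one must recover the sharp exponent $1/4$ rather than $1/4-\epsilon$. After applying the Burkholder--Davis--Gundy inequality, both pieces reduce, via the same Hilbert--Schmidt computation as in \eqref{kg-whi0} (using $\|e_k\|_{\mathbb L^\infty(\OOO)}\le C$ and the linear growth of $g$), to scalar series in the eigenvalues. The $[s,t]$-piece reduces to $\sum_k\int_s^t e^{-2\lambda_k(t-r)}\,\mathrm{d}r=\sum_k\frac{1-e^{-2\lambda_k(t-s)}}{2\lambda_k}$, while the $[0,s]$-piece, after integrating in $r$ first, reduces to $\sum_k\frac{(1-e^{-\lambda_k(t-s)})^2}{2\lambda_k}(1-e^{-2\lambda_k s})\le\sum_k\frac{(1-e^{-\lambda_k(t-s)})^2}{2\lambda_k}$. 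The key point, which I would carry out carefully, is that each such series is bounded by $C(t-s)^{1/2}$: writing $h=t-s$ and splitting at the threshold $\lambda_k\sim h^{-1}$ while using $\lambda_k\simeq k^2$ from Weyl's law \eqref{weyl}, the low modes ($\lambda_k\le h^{-1}$) contribute $h^2\sum_{\lambda_k\le h^{-1}}\lambda_k\simeq h^2 h^{-3/2}=h^{1/2}$ and the high modes ($\lambda_k>h^{-1}$) contribute $\sum_{\lambda_k>h^{-1}}\lambda_k^{-1}\simeq h^{1/2}$. A crude bound $(1-e^{-x})^2\le x^{2\kappa}$ would only deliver $1/4-\epsilon$, so performing the Fubini step and the threshold splitting is essential. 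Taking square roots produces the exponent $1/4$ uniformly in $N$, the $N$-truncated sums being dominated by the full sums.

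Combining the three contributions, the overall uniform Hölder exponent is the minimum $\tfrac\beta2\wedge(\tfrac12-\epsilon)\wedge\tfrac14=\tfrac\beta2\wedge\tfrac14$, which is exactly the claimed $\gamma$; since every constant above is independent of $N$ by construction, Assumption \ref{ap-xn-hol} follows.
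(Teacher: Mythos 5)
Your proposal is correct and follows essentially the same route as the paper's proof: the same five-term decomposition of $X^N(t)-X^N(s)$ (initial-value term, the $[s,t]$ and $[0,s]$ pieces of each convolution), the smoothing bound \eqref{ana} with exponent $\beta/2$ for the initial datum, fractional powers giving $(t-s)^{\tilde\theta/2}$ with $\tilde\theta<1$ for the drift commutator, and the reduction of the stochastic convolution via Burkholder--Davis--Gundy to the eigenvalue series $\sum_j \lambda_j^{-1}\bigl((t-s)^2\lambda_j^2\wedge 1\bigr)\le C(t-s)^{1/2}$. Your explicit Fubini-plus-threshold-splitting argument with Weyl's law is precisely the justification of the bound the paper asserts in its estimate of $I_5$, so the two proofs coincide in substance.
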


\begin{proof}
	Let $0\le s<t\le T$. 
	By the Minkowski and Burkholder--Davis--Gundy inequalities, we get 
	\begin{align*}
	\|X^N(t)-X^N(s)\|_{\mathbb L^p(\Omega;H)}  \le
	& \|(S^N(t)-S^N(s))X^N_0\|_{\mathbb L^p(\Omega;H)} \\
	&+\int_s^t \|S^N(t-r)F^N(X^N(r))\|_{\mathbb L^p(\Omega;H)} {\rm d}r \\
	&+\int_0^s \|(S^N(t-s)-{\rm Id}_H) S^N(s-r) F^N(X^N(r))\|_{\mathbb L^p(\Omega;H)} {\rm d}r \\
	&+ C\bigg(\int_s^t \|S^N(t-r)G^N(X^N(r))\|^2_{\mathbb L^p(\Omega;\LL_2^0)}  {\rm d}r\bigg)^\frac12 \\
	&+C\bigg(\int_0^s \|(S^N(t-s)-{\rm Id}_H) S^N(s-r) G^N(X^N(r))\|^2_{\mathbb L^p(\Omega;\LL_2^0)}   {\rm d}r\bigg)^\frac12\\
	=& :I_1+I_2+I_3+I_4+I_5.
	\end{align*}
	The smoothing effect \eqref{ana} of the semigroup $S$ yields
	\begin{align*}
	I_1 
	&\le C \|(-A)^{-\frac\beta2}(S(t-s)-{\rm Id}_H)\|_{\LL(H)} \|S(s)\|_{\LL(H)} \|X_0\|_{\mathbb L^p(\Omega;\dot{H}^\beta)}\\
	&\le C(t-s)^{\frac\beta2}.
	\end{align*}
	Since $K_F$ and $K_G$ given by \eqref{kf0} and \eqref{kg-whi0}, respectively, satisfy
	\begin{align*} 
	\int_0^t K_F(r) {\rm d}r \le C t^{\frac12}, \quad
	\left(\int_0^t K^2_G(r) {\rm d}r \right)^{\frac12}\le C t^{\frac14},
	\quad t\in (0,T],
	\end{align*}
	we obtain by Assumptions \ref{ap-con}-\ref{ap-spe} that
	\begin{align*}
	I_2
	&\le \bigg(\int_0^{t-s} K_{F^N}(r) {\rm d}r \bigg) 
	\bigg(1+\sup_{t\in [0,T]} \|X^N(t)\|_{\mathbb L^p(\Omega;H)}\bigg) \\
	&\le \bigg(\int_0^{t-s} K_{F}(r) {\rm d}r \bigg) 
	\bigg(1+\sup_{t\in [0,T]} \|X(t)\|_{\mathbb L^p(\Omega;H)}\bigg) \\
	&\le C(t-s)^{\frac12}
	\end{align*}
	and 
	\begin{align*}
	I_4
	&\le C\bigg(\int_0^{t-s} K^2_{G^N}(r) {\rm d}r \bigg)^\frac12 
	\bigg(1+\sup_{t\in [0,T]} \|X^N(t)\|_{\mathbb L^p(\Omega;H)}\bigg) \\
	&\le C\bigg(\int_0^{t-s} K^2_{G}(r) {\rm d}r \bigg)^\frac12 
	\bigg(1+\sup_{t\in [0,T]} \|X(t)\|_{\mathbb L^p(\Omega;H)}\bigg) \\
	&\le C(t-s)^{\frac14}.
	\end{align*}
	For the term $I_3$, we use \eqref{kf} to derive that for any $\tilde{\theta}\in(0,1)$,
	\begin{align*}
	I_3
	&\le \int_0^s \|(-A)^{-\frac{\tilde{\theta}}{2}}(S(t-s)-{\rm Id}_H)\|_{\LL(H)}
	\|S^N(s-r) F^N(X^N(r))\|_{\mathbb L^p(\Omega;\dot{H}^{\tilde{\theta}})} {\rm d}r \\
	&\le  C(t-s)^{\frac{\tilde{\theta}}{2}} \bigg(\int_0^s K_{F_{\tilde{\theta}}} {\rm d}r\bigg)  \Big(1+\sup_{t \in [0,T]}\|X(t)\|_{\mathbb L^p(\Omega;H)}\Big)  
	\le C(t-s)^{\frac{\tilde{\theta}}{2}}.
	\end{align*}
	By the definition of the $\LL_2^0$-norm and the Lipschitz continuity  of $g$, we obtain
	\begin{align*}
	I_5 
	&\le C \bigg(\int_0^s \|(S(t-r)-S(s-r)) G(X^N(r))\|^2_{\mathbb L^p(\Omega;\LL^0_2)}{\rm d}r\bigg)^\frac12  \\
	&\le C \bigg(\int_0^s \sum_{j=1}^{\infty}  
	\left(e^{-\lambda_j(t-r)}-e^{-\lambda_j(s-r)}\right)^2
	\|g(X^N(r)) e_j\|_{\mathbb L^p(\Omega;H)}^2 {\rm d}r \bigg)^\frac12  \\
	&\le C \bigg(\int_0^s \sum_{j=1}^{\infty}  
	\left(e^{-\lambda_j(t-r)}-e^{-\lambda_j(s-r)}\right)^2
	{\rm d}r \bigg)^\frac12  \bigg(1+\sup_{t\in [0,T]} \|X(t)\|_{\mathbb L^p(\Omega;H)}\bigg)\\
	&\le  C\left(\sum_{j=1}^\infty \lambda_j^{-1} 
	((t-s)^2 \lambda_j^{2} \wedge 1)\right)^{\frac12}\\
	&\le C(t-s)^\frac14.
	\end{align*}
	The above estimates show
	\begin{align*}
	\sup_{N\in \nn_+}\sup_{0\le s<t\le T}\frac{\|X^N(t)-X^N(s)\|_{\mathbb L^p(\Omega;H)}}{(t-s)^{\frac{\beta}{2}\wedge \frac14}} \le C,
	\end{align*}
	i.e., Assumption \ref{ap-xn-hol} holds with $\gamma=\frac\beta2\wedge \frac14$.
\end{proof}

Combining Lemmas \ref{ap-whi}-\ref{lm-hol} with Theorem \ref{main}, we obtain the strong convergence rate for scheme \eqref{scheme} applied to Eq. \eqref{she} driven by the space-time white noise.

\begin{theorem}\label{tm-whi}
	Let $p\ge 2$ and $\beta\in (0,2]$. 
	Assume that $X_0:\Omega\rightarrow \dot{H}^\beta$ is   $\FFF_0/\mathcal{B}(\dot{H}^\beta)$-measurable such that 
	$X_0\in \mathbb L^p(\Omega;\dot{H}^\beta)$, then there exists a constant $C$ such that
	\begin{align*}
	\sup_{0\le k \le K}\|X(t_k)-X^N_k\|_{\mathbb L^p(\Omega;H)}\le C (N^{-({\beta\wedge\frac12})}+\tau^{\frac{\beta}{2}\wedge \frac14}),
	\end{align*}
	where $X(t)$ is the unique mild solution of Eq. \eqref{she} and $X^N_k$ is defined by scheme \eqref{scheme}.
\end{theorem}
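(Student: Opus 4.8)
The plan is to realize Theorem \ref{tm-whi} as a direct corollary of the abstract estimate \eqref{x-xnk} of Theorem \ref{main}, specialized to the concrete data of Eq.\ \eqref{she} with the spectral Galerkin approximation in space and the exponential integrator in time. All the genuinely analytic work has already been carried out in Lemmas \ref{ap-whi} and \ref{lm-hol}, together with the preliminary computations \eqref{kf0}--\eqref{kg-whi0} of this section; what remains is to check that every hypothesis of Theorem \ref{main} is met for this example and then to collapse the resulting minima of exponents into the stated clean rates.

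First I would fix the spatial regularity exponent $\theta$. Since $\beta\in(0,2]$, the intersection $[0,\beta]\cap[0,1/2)$ is nonempty, so I choose $\theta$ in this range; this guarantees $X_0\in\mathbb L^p(\Omega;\dot{H}^\beta)\subseteq\mathbb L^p(\Omega;\dot{H}^\theta)$ and, via the integrability of $K_G$ and $K_{G_\theta}$ established in \eqref{kg-whi0} (which requires precisely $\theta<1/2$), that Assumptions \ref{ap-con} and \ref{ap-spe} hold and that $\{K_{F^N}+K^2_{G^N}\}_{N\in\nn_+}$ satisfies the discrete Gronwall condition \eqref{Gron_R}, as recorded in the section preamble. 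With this $\theta$ fixed, Lemma \ref{ap-whi} supplies Assumptions \ref{ap-spe-X0}, \ref{ap-spe-ord} and \ref{ap-tem} with $r_0=\beta$, $r_F=1-\epsilon$, $r_G=\frac12$, $\eta_F=\frac12-\epsilon$, $\eta_G=\frac14$, while Lemma \ref{lm-hol} supplies Assumption \ref{ap-xn-hol} with the uniform H\"older exponent $\gamma=\frac\beta2\wedge\frac14$.

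With Assumptions \ref{ap-con}--\ref{ap-xn-hol} and condition \eqref{Gron_R} all verified, I would invoke Theorem \ref{main} to obtain
\begin{align*}
\sup_{0\le k\le K}\|X(t_k)-X^N_k\|_{\mathbb L^p(\Omega;H)}\le C\left(N^{-(r_0\wedge r_F\wedge r_G)}+\tau^{\gamma\wedge\eta_F\wedge\eta_G}\right).
\end{align*}
The only remaining step is bookkeeping on the exponents. Choosing $\epsilon<\frac14$ makes $r_G=\frac12\le 1-\epsilon=r_F$ and $\eta_G=\frac14\le\frac12-\epsilon=\eta_F$, so the two minima simplify to $r_0\wedge r_F\wedge r_G=\beta\wedge\frac12$ and $\gamma\wedge\eta_F\wedge\eta_G=\frac\beta2\wedge\frac14$, which is exactly the asserted rate.

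I do not anticipate a genuine obstacle, since the statement is a synthesis rather than a new estimate. The one point requiring care is that the infinitesimal loss $\epsilon$ attached to $r_F$ and $\eta_F$ must be shown not to propagate into the final rate; this is precisely why it matters that the diffusion-induced exponents $r_G=\frac12$ and $\eta_G=\frac14$, which carry no $\epsilon$, are the binding constraints, so that a single small choice of $\epsilon$ renders the drift contributions irrelevant. A secondary point is to confirm that the invoked constants are uniform in $N$, which is already guaranteed by the $N$-uniform bounds in Lemmas \ref{ap-whi} and \ref{lm-hol} and by the contraction property of $P_N$ used to inherit Assumption \ref{ap-spe} from Assumption \ref{ap-con}.
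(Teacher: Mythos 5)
Your proposal is correct and follows exactly the paper's own route: the paper proves Theorem \ref{tm-whi} precisely by combining Lemmas \ref{ap-whi}--\ref{lm-hol} with Theorem \ref{main}, using the verification of Assumptions \ref{ap-con}, \ref{ap-spe} and condition \eqref{Gron_R} already recorded in the section preamble. Your additional bookkeeping on choosing $\epsilon<\frac14$ so that the $\epsilon$-dependent drift exponents $r_F=1-\epsilon$ and $\eta_F=\frac12-\epsilon$ are dominated by $r_G=\frac12$ and $\eta_G=\frac14$ is a correct and slightly more explicit account of a step the paper leaves implicit.
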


\begin{remark} \label{rk-whi}
	For the following linear equation with additive space-time white noise
	\begin{align*}
	\begin{split}
	&{\rm d}X(t)=AX(t){\rm d}t+{\rm d}W(t),
	\ t\in (0,T];\\
	& X(0)=0,
	\end{split}
	\end{align*}
	assume that $H=U=\mathbb{L}^2(0,1)$ and that $A=\Delta$ is the Dirichlet Laplacian, then
	the unique mild solution is given by the Ornstein--Uhlenbeck process
	$W_A(t)=\int_{0}^{t}S(t-r){\rm d}W(r)$.
	We have that
	\begin{align*}
	\|W_A(t)\|^2_{\mathbb L^2(\Omega;\dot{H}^\theta)}&=\int_{0}^{t}\|(-A)^{\frac{\theta}{2}}S(t-r)\|_{\LL_2^0}^2{\rm d}r\\
	&=\int_{0}^{t}\sum_{j=1}^{\infty}\|(-A)^{\frac{\theta}{2}}S(t-r)e_j\|^2{\rm d}r\\
	&=\sum_{j=1}^{\infty} \lambda_j^{\theta}\frac{1-e^{-2\lambda_j t}}{2\lambda_j}\\
	&\ge \frac{1-e^{-2t}}{2}\sum_{j=1}^{\infty} \lambda_j^{\theta-1},\quad t>0,
	\end{align*}   
	which is divergent as long as $\theta\ge\frac12$. This implies that the optimal spatial regularity is $\dot{H}^\theta$ with $\theta<\frac12$. Comparing this with the best probable convergence order $\frac12$ provided in Theorem \ref{tm-whi}, it reveals that 
	the spectral Galerkin approximation is superconvergent in the spacial direction.
	
	Moreover, since the optimal H\"older index of the solution under the $\mathbb L^p(\Omega;H)$-norm proved in Lemma \ref{lm-hol} is $\frac14$, the temporal convergence rate of the exponential integrator is optimal.

\end{remark}

\section{Numerical Experiments}
\label{sec5}

In this section, we give numerical tests to verify our theoretical result on the convergence rate of the fully discrete scheme \eqref{scheme}. 
We apply scheme \eqref{scheme} to the following one-dimensional stochastic advection-diffusion-reaction equation (where we take $f(v)=-v$, $\widetilde f(v)=-v/(1+|v|)$ and $g(v)=(1+v)/8$ for $v\in \rr$):
\begin{align}\label{ex}
\begin{split}
{\rm d} X(t,\xi)
=\Big(\Delta X(t,\xi)-\nabla X(t,\xi)-\frac{X(t,\xi)}{1+|X(t,\xi)|}\Big) {\rm d}t+
\frac{1+X(t,\xi)}{8} {\rm d}W(t,\xi),
\end{split}
\end{align}
with homogeneous Dirichlet boundary condition, in the time-space domain $(0,T]\times \OOO=(0,1] \times (0,1)$. We refer to \cite{BCD06(RG)} and references therein for relevant applications.

We take the initial datum of the form
\begin{align}\label{ini}
X(0,\xi)=X_0(\xi)=\sum_{j=1}^\infty \frac{e_j(\xi)}{j^{1.01}},
\quad \xi\in (0,1).
\end{align}
Besides,
\begin{align}\label{W}
W(t,\xi)=\sum_{j=1}^\infty e_j(\xi) \beta_j(t),
\quad (t,\xi)\in (0,1] \times (0,1).
\end{align} 
Here 
$\{e_j(\cdot)=\sqrt{2}\sin(j\pi\cdot)\}_{j\in \nn_+}$ is a sequence of eigenfunctions of the Laplacian operator $\Delta$, which forms an orthonormal basis of $\mathbb L^2(0,1)$,
and $\{\beta_j\}_{j\in \nn_+}$ is a sequence of independent standard Brownian motions.
Then the solution of \eqref{ex} with initial datum \eqref{ini} and cylindrical Wiener process \eqref{W} possesses a unique mild solution in $\CC([0,T]; \mathbb L^p(\Omega; \dot{H}^\theta))$ for any $p\ge 2$ and $\theta<\frac12$.


To simulate the `exact' solution, we perform the fully discrete scheme by $N=2^{9}$ for the dimension of spectral Galerkin approximation and  $\tau=2^{-13}$ for the time step size of the exponential integrator. The series of \eqref{W} is truncated as a finite summation up to $2^{9}$ terms.

Taking $\tau=2^{-i}$ with $i=7,8,9,10$ and $N=2^{9}$, we have the mean-square convergence rate  in temporal direction shown in the left picture of Figure \ref{f1}. Choosing four different spatial dimensions $N=2^{i}$ with $i=4,5,6,7$ and fixing $\tau=2^{-13}$, we get the mean-square convergence rate in spatial direction, which are presented in the right picture of Figure \ref{f1}. The expectation is approximated from the average of $200$ sample paths. 

From these numerical experiments, it is clear that the spatial and temporal strong convergence orders of the scheme \eqref{scheme} applied to Eq. \eqref{ex} are $\frac12$ and $\frac14$, respectively.
They validate the theoretical result of Theorem \ref{main}.

\begin{figure}
	\centering
	\subfigure[Temporal error]{
		\begin{minipage}[t]{0.46\linewidth}
			\includegraphics[height=5cm,width=5.8cm]{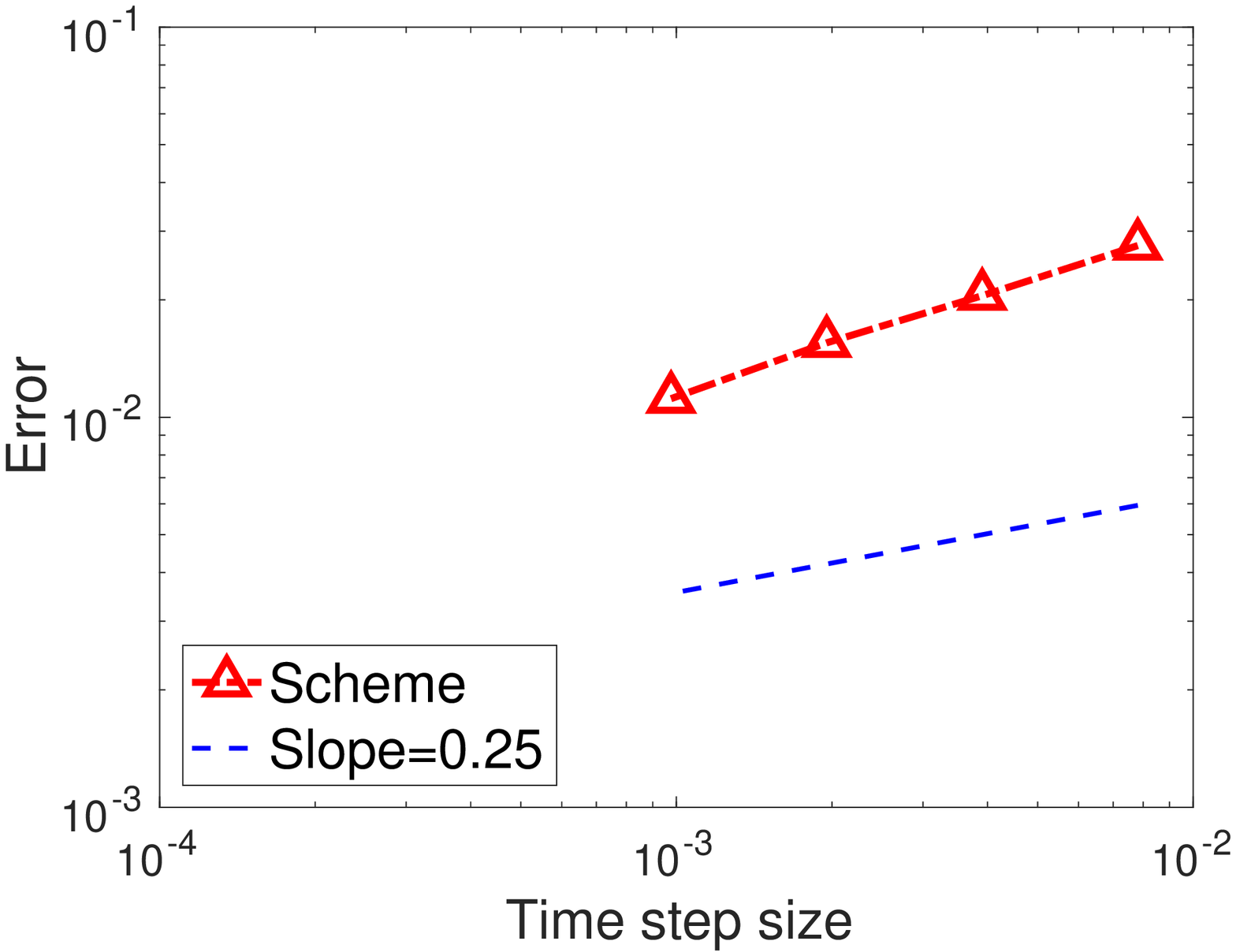}
		\end{minipage}
	}
	\subfigure[Spatial error]{
		\begin{minipage}[t]{0.46\linewidth}
			\includegraphics[height=5cm,width=5.8cm]{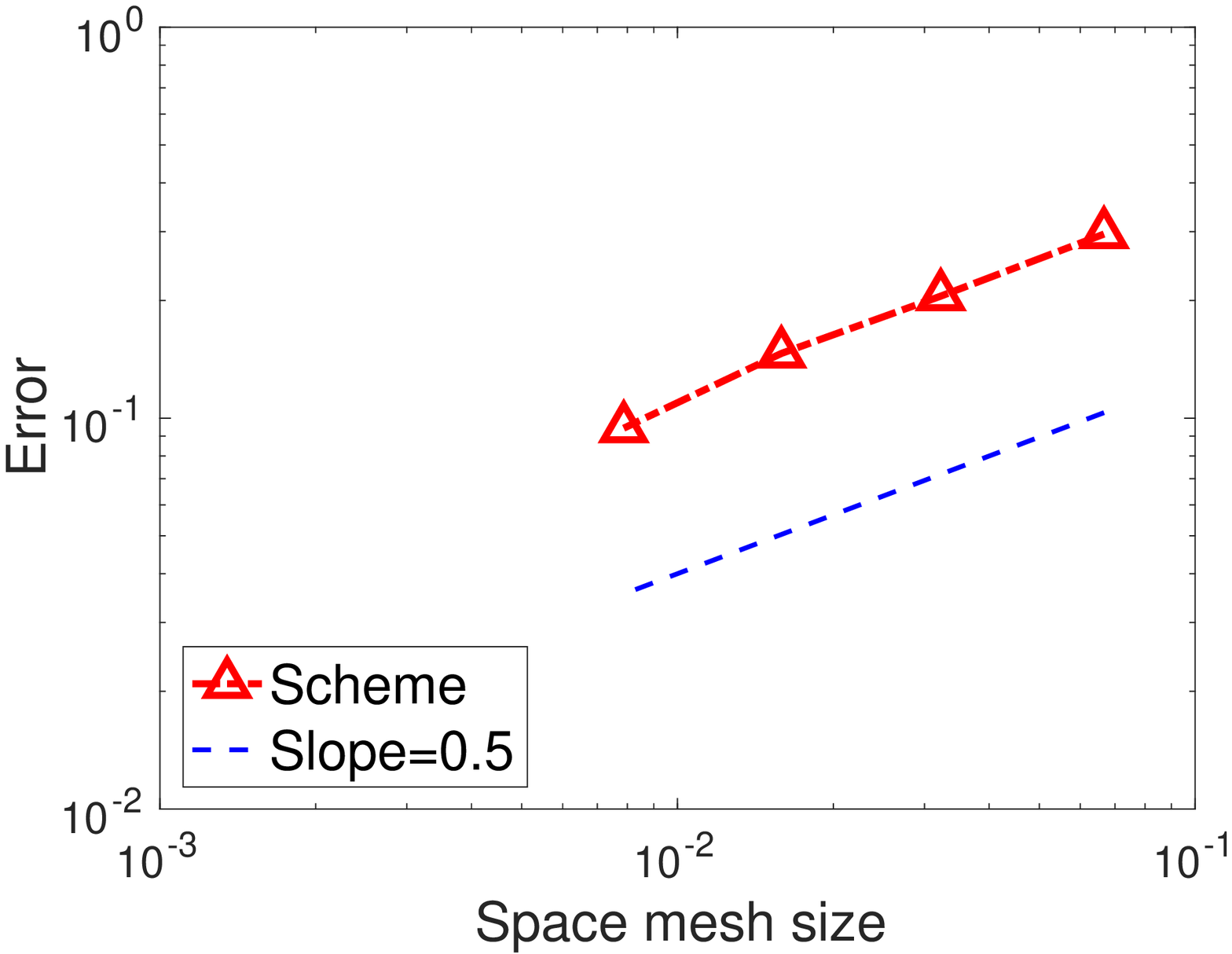}
		\end{minipage}
	}
	\caption{Temporal (left) and spatial (right) convergence rates.} \label{f1}
\end{figure}




%
%



\end{document}